\tikzset{every picture/.append style={remember picture},
na/.style={baseline=-.5ex}}
\theoremstyle{plain}
\newtheorem{theorem}{Theorem}[section]
\newtheorem{lemma}[theorem]{Lemma}
\newtheorem{corollary}[theorem]{Corollary}
\newtheorem{question}[theorem]{Question}
\theoremstyle{definition}
\newtheorem{definition}{Definition}[section]
\theoremstyle{remark}
\newtheorem{remark}{Remark}[section]
\newcommand{\nada}[1]   {}
\newcommand{\cout}[1]   {}
\newcommand{\Sbb} {\mathbb S}
\newcommand{\sphere} {\Sbb^3}
\newcommand{\Ztwo} {\mathbb Z_2}
\newcommand{\Zone} {\mathbbm  1}
\newcommand{\sym}[1]{\mathrm{MCG}(\sphere,#1)} 
\newcommand{\mcg}{\mathrm{MCG} } 
\newcommand{\pmcg}{\mathrm{MCG}_+ } 
\newcommand{\psym}[1]{\mathrm{MCG}_+(\sphere,#1)}
\newcommand{\paut}{\mathrm{Homeo}_+}
\newcommand{\id}{\mathrm{id}}
\newcommand{\rel}{\mathrm{rel} }
\newcommand{\Compl}[1]{E(#1)}
\newcommand{\Split}[1]{\widehat{E}(#1)}
\newcommand{\rnbhd}[1]{N(#1)}
\newcommand{\openrnbhd}[1]{\mathring{N}(#1)}
\newcommand{\hcfourone}{\mathbf{h4}_1} 
\newcommand{\hctwoone}{\mathbf{h2}_1}
\newcommand{\fivetwo}{{\mathbf 5_{2}}} 
\newcommand{\sixthirteen}{{\mathbf 6_{13}}} 
\newcommand{\sixtwelve}{{\mathbf 6_{12}}} 
\newcommand{\seventhirtynine}{{\mathbf  7_{39}}}
\newcommand{\seventhirtyeight}{{\mathbf  7_{38}}}
\newcommand{\seventhirtysix}{{\mathbf  7_{36}}}
\newcommand{\seventhirtytwo}{{\mathbf  7_{32}}}
\newcommand{\sevenfiftynine}{{\mathbf 7_{59}}}
\newcommand{\sevensixty}{{\mathbf  7_{60}}}
\newcommand{\draftMP}[1]{\ifdraft{\color{blue}#1}\fi}
\newcommand{\draftGB}[1]{\ifdraft{\color{red}#1}\fi}
\newcommand{\draftYW}[1]{\ifdraft{\color{orange}#1}\fi}
\newcommand{\draftGP}[1]{\ifdraft{\color{teal}#1}\fi}
\numberwithin{equation}{section}
\newif\ifdraft
\title[Tangle replacement]{Handlebody-knots obtained by tangle replacement}
\author[G.\ Bellettini]{Giovanni Bellettini}
\address{Dipartimento di Scienze Matematiche, Informatiche e Fisiche,
via delle Scienze 206, 33100 Udine UD, Italy
and International Centre for Theoretical Physics ICTP,
Mathematics Section, 34151 Trieste, Italy
}
\email{giovanni.bellettini@uniud.it}
\author[G.\ Paolini]{Giovanni Paolini}
\address{Dipartimento di Matematica, Universit\`a di Bologna, Piazza di Porta
San Donato 5, 40126 Bologna, Italy}
\email{g.paolini@unibo.it}
\author[M.\ Paolini]{Maurizio Paolini}
\address{Dipartimento di Matematica e Fisica, Universit\`a Cattolica del Sacro Cuore, 25121 Brescia, Italy}
\email{maurizio.paolini@unicatt.it}
\author[Y.\ S.\ Wang]{Yi-Sheng Wang}
\address{National Sun Yat-sen University, Kaohsiung, Taiwan}
\email{yisheng@math.nsysu.edu.tw}
\date{\today}
\subjclass{57K12, 05C10}
\keywords{Tangle replacement; spatial graph; handlebody-knot}
\begin{document}

\thanks{The fourth author gratefully acknowledges the support from NSTC, Taiwan (grant no. 112-2115-M-110 -001 -MY3).}

\begin{abstract}
We study tangle replacement in the context of handlebody-knots. The main results classify the handlebody-knots obtained by performing tangle replacement on the prime spatial handcuff graph with four crossings, and determine their symmetry. The work is motivated by the study of small crossing handlebody-knots that are difficult to distinguish with computational invariants and by their chirality problem.
\end{abstract}

\maketitle

%
%

\draftMP{DraftMP comments by Maurizio}
\draftYW{DraftYW comments by Yi-Sheng}
\draftGB{DraftGB comments by Giovanni B}
\draftGP{DraftGP comments by Giove}

\section{Introduction}\label{sec:intro}

A \emph{spatial graph} is an embedded finite connected CW-complex $\Gamma$ in the oriented $3$-sphere $\sphere$. The classical knot theory studies the case where $\Gamma$ is a cycle. 
A \emph{genus $g$ handlebody-knot} is an embedded genus $g$ handlebody in $\sphere$. 
Two spatial graphs $\Gamma,\Gamma'$ (resp.\ two handlebody-knots $V,V'$) are \emph{equivalent} 
if there is an orientation-preserving self-homeomorphism $f$ of $\sphere$ with
$f(\Gamma)=\Gamma'$ (resp.\ $f(V)=V'$).

A regular neighborhood of a spatial graph 
determines a unique handlebody-knot, up to equivalence. Conversely, every spine of a handlebody-knot is a spatial graph, and it is typical to represent a handlebody-knot by its spines. A handlebody-knot may, however, have infinitely many spines, inequivalent as spatial graphs. Since the spine of a genus one handlebody is unique, up to ambient isotopy, the study of genus one handlebody-knots is equivalent to the classical knot theory. A spatial graph is \emph{trivial} if it is contained in a $2$-sphere in $\sphere$, and a handlebody-knot is \emph{trivial} if it has a trivial spine. The present study concerns the following question:
\begin{question}\label{ques:informal}
How local changes to a spine of a handlebody-knot affects its topology?
\end{question}

Figs.\ \ref{fig:fivetwo}, \ref{fig:sixthirteen}, as well as Figs.\ \ref{fig:seventhirtysix}, \ref{fig:seventhirtyeight} show that local modifications of spines may change one handlebody-knot into another. On the other hand, both Figs.\ \ref{fig:trivial_I}, \ref{fig:trivial_II} represent the trivial handlebody-knot, and hence provide an example where local change of spines does not alter the topology of handlebody-knots. In the classical knot theory, many operations takes the form of tangle replacement, for instance, an unknotting operation or more generally a rational unknotting operation, which changes a knot drastically, or at the opposite end, the knot mutation, which produces knots hardly distinguishable; see Iltgen-Lewark-Marino \cite{IltLewMar:25}, Hom \cite{Hom:22} and references therein.     

\begin{figure}[t]
	\begin{subfigure}{.32\linewidth}
		\centering
		\begin{overpic}[scale=.11,percent]{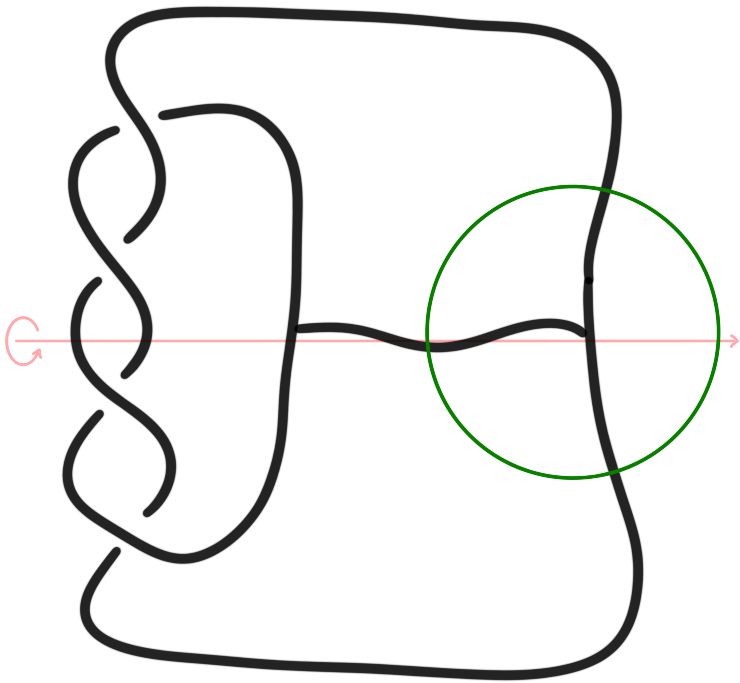}
			\put(0,35){\footnotesize $\pi$}
			\put(50,40){\footnotesize $x$}
			\put(84,68){\footnotesize $y$}
			\put(78,22){\footnotesize $z$}
		\end{overpic}
		\caption{$\hcfourone$.}
		\label{fig:hc_fourone}
	\end{subfigure}
	\begin{subfigure}{.32\linewidth}
		\centering
		\begin{overpic}[scale=.11,percent]{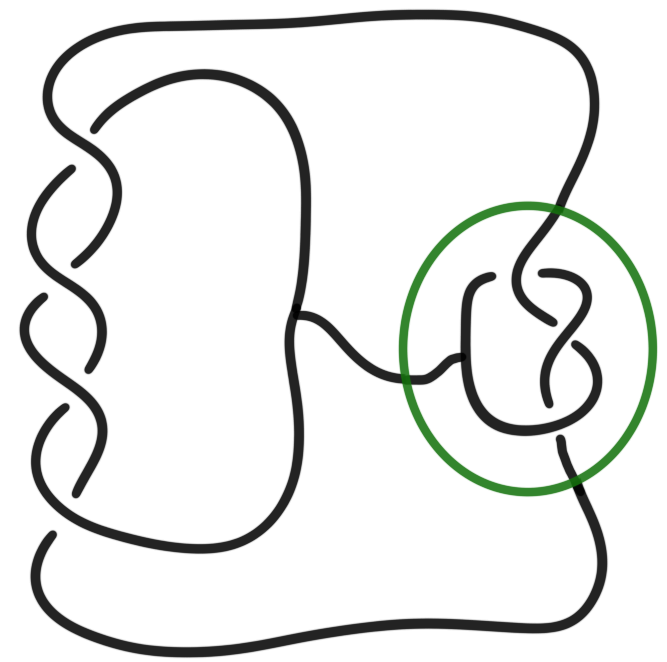}
			\put(68.5,58.5){\footnotesize $B$} 
		\end{overpic}
		\caption{$\fivetwo$.}
		\label{fig:fivetwo}
	\end{subfigure}
	\begin{subfigure}{.32\linewidth}
		\centering
		\begin{overpic}[scale=.11,percent]{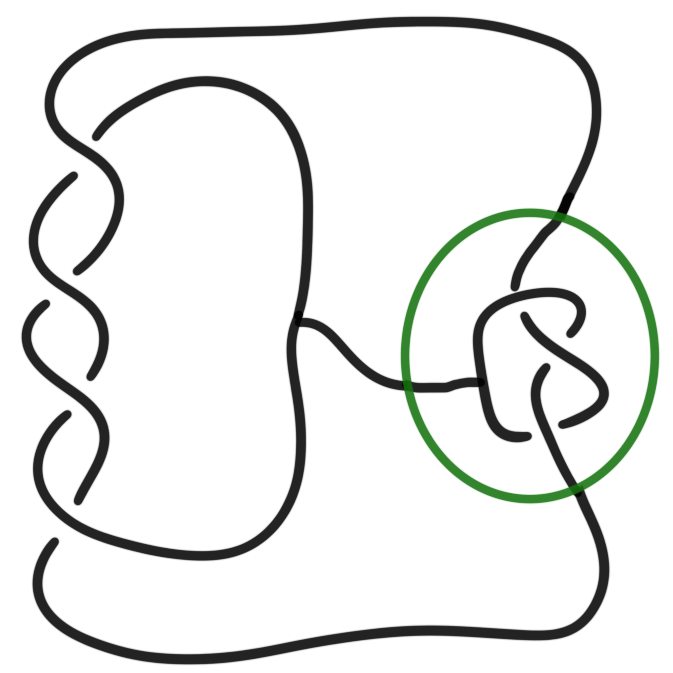}
			\put(67,57){\footnotesize $B$} 
		\end{overpic}
		\caption{$\sixthirteen$.}
		\label{fig:sixthirteen}
	\end{subfigure}
	\begin{subfigure}{.32\linewidth}
		\centering
		\begin{overpic}[scale=.1,percent]{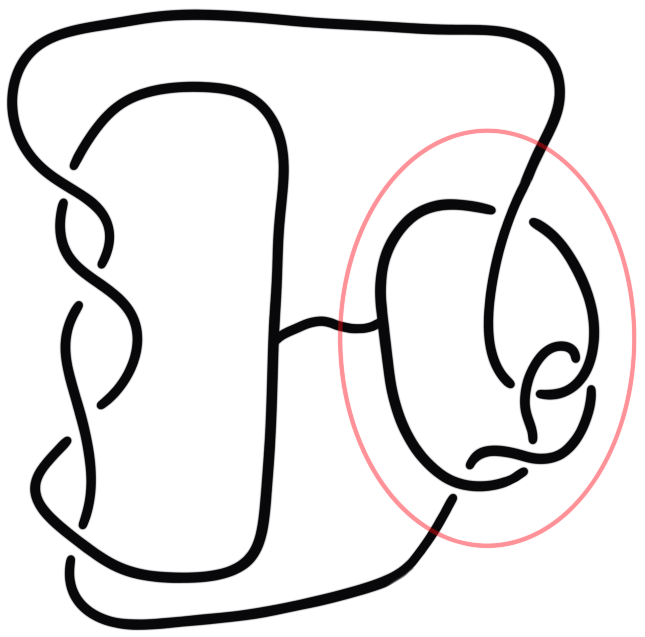}
		\end{overpic}
		\caption{$\seventhirtysix$.}
		\label{fig:seventhirtysix}
	\end{subfigure}
	\begin{subfigure}{.32\linewidth}
		\centering
		\begin{overpic}[scale=.1,percent]{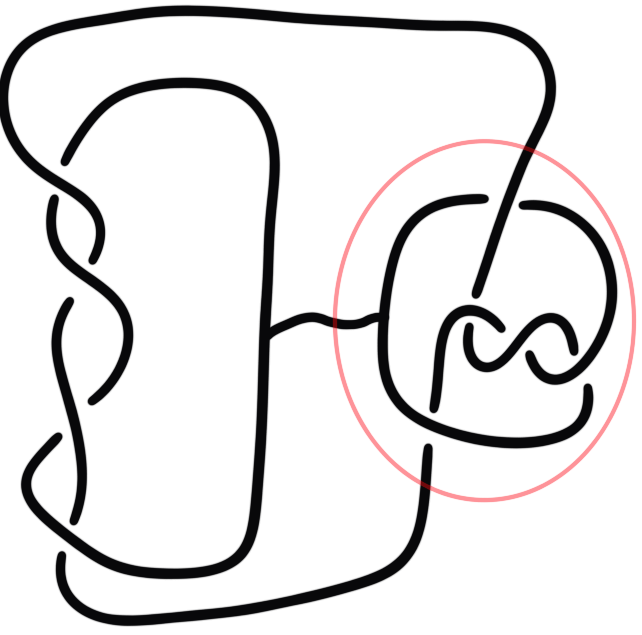}
		\end{overpic}
		\caption{$\seventhirtyeight$.}
		\label{fig:seventhirtyeight}
	\end{subfigure}
    \caption{\ref{fig:fivetwo}, \ref{fig:sixthirteen}, \ref{fig:seventhirtysix}, \ref{fig:seventhirtyeight} are handlebody-knots in the handlbody-knot tables \cite{IshKisMorSuz:12,BelPaoPaoWan:25p}. 
    }
	\label{fig:small_crossing}
\end{figure}

\begin{figure}[b]
	\begin{subfigure}{.24\linewidth}
		\centering
		\begin{overpic}[scale=.09,percent]{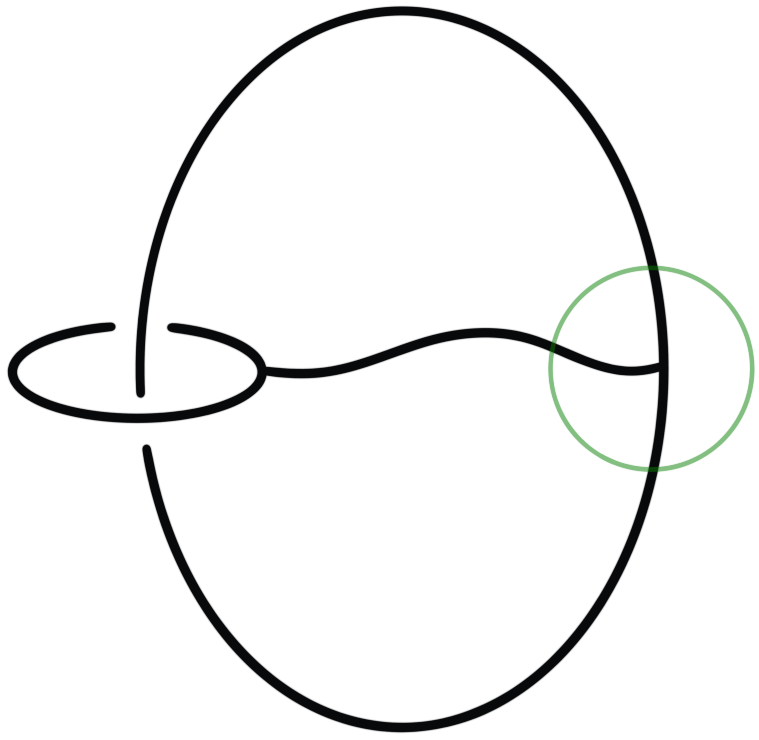}
		\end{overpic}
		\caption{$\hctwoone$.}
		\label{fig:hc_two}
	\end{subfigure}
	\begin{subfigure}{.24\linewidth}
		\centering
		\begin{overpic}[scale=.09,percent]{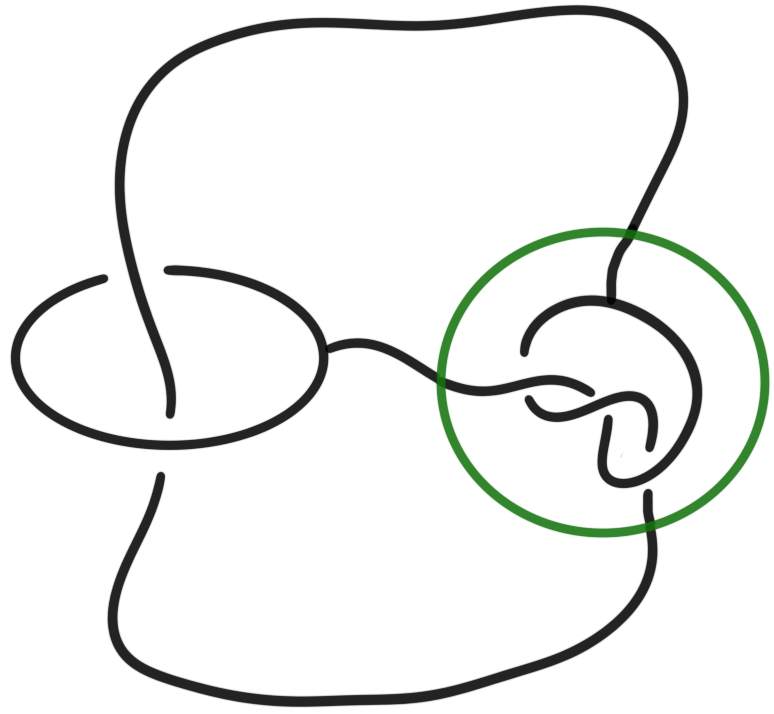}
			\put(84,51.3){\footnotesize $B$} 
		\end{overpic}
		\caption{Irrational $\alpha$.}
		\label{fig:trivial_I}
	\end{subfigure}
	\begin{subfigure}{.24\linewidth}
		\centering
		\begin{overpic}[scale=.09,percent]{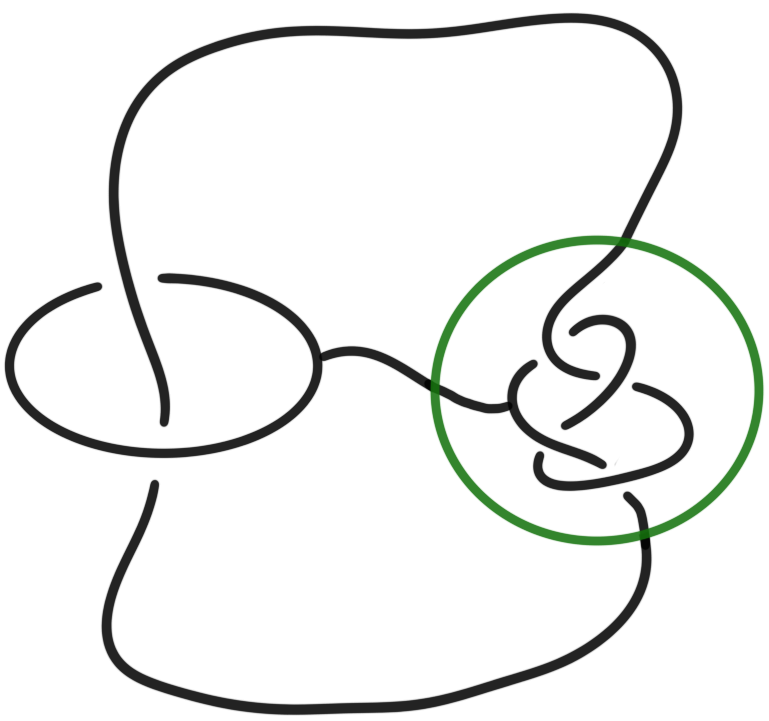}
			\put(86,48){\footnotesize $B$} 
		\end{overpic}
		\caption{Rational $\beta$.}
		\label{fig:trivial_II}
	\end{subfigure} 
	\begin{subfigure}{.24\linewidth}
		\centering
		\begin{overpic}[scale=.09,percent]{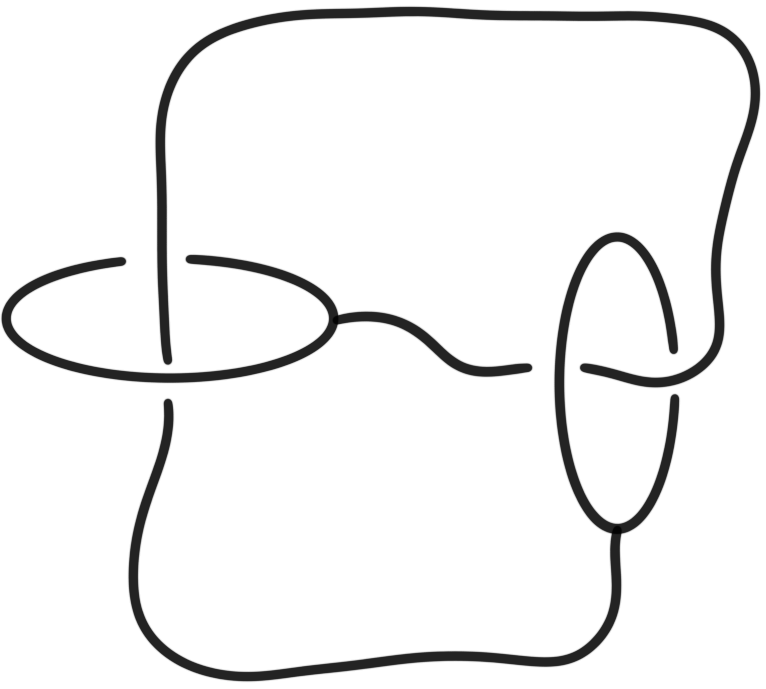}
		\end{overpic}
		\caption{Sum of two $\hctwoone$.}
		\label{fig:hc_fourtwo}
	\end{subfigure}
	\caption{\ref{fig:trivial_I},\ref{fig:trivial_II} are obtained by tangle replacement on \ref{fig:hc_two}.}
	\label{fig:replacement_hc_two} 
\end{figure}

In general, given a spatial graph $\Gamma$, we consider a $3$-ball $B\subset \sphere$ such that $\partial B$ meets $\Gamma$ transversally and $T:=B\cap\Gamma$ is contained in a proper disk in $B$, namely $T$ is \emph{trivial}. A \emph{$T$-tangle} is then a properly embedded CW-complex $\alpha$ in $B$ with $\alpha$ homeomorphic to $T$ and $\alpha\cap \partial B=T\cap \partial B$. The union
$\Gamma_\alpha:=(\Gamma-T)\cup \alpha$ is called \emph{the spatial graph obtained from $\Gamma$ by the tangle replacement with $\alpha\subset B$}; see Fig.\ \ref{fig:small_crossing} for the case where $\Gamma$ is $\hcfourone$ in Fig.\ \ref{fig:hc_fourone}.
We denote by $V_\alpha$ the handlebody-knot given by a regular neighborhood of $\Gamma_\alpha$, and say two handlebody-knots $V_1,V_2$ differ by \emph{a local change to their spines} if $V_1,V_2$ are equivalent 
to $V_\alpha,V_\beta$ for some $T$-tangles $\alpha,\beta$. The handlebody-knots represented by Figs.\ \ref{fig:trivial_I}, \ref{fig:trivial_II} 
and those represented by \ref{fig:fivetwo}, \ref{fig:sixthirteen}, \ref{fig:seventhirtysix}, \ref{fig:seventhirtyeight} are examples. Two $T$-tangles $\alpha,\beta$ in $B$ are \emph{equivalent} if there is an orientation-preserving self-homeomorphism $f$ of $B$ fixing $\partial B$ with $f(\alpha)=\beta$.
With the terminologies, we can now make Question \ref{ques:informal} more specific.

\begin{question}\label{ques:formal}
Given two $T$-tangles $\alpha,\beta$, when are $V_\alpha,V_\beta$ inequivalent?  
\end{question}

For the case $T$ is an arc, the question is answered by Schubert's knot factorization \cite{Sch:49} and Ishii-Kishimoto-Ozawa \cite{IshKisOza:15} and Koda-Ozawa \cite[Appendix $B$]{KodOzaGor:15}. Little is known in general, and the answer depends on the given spatial graph $\Gamma$. 

If $\Gamma$ is the simplest non-trivial spatial handcuff graph $\hctwoone$, namely Fig.\ \ref{fig:hc_two} or $2_1$ in Moriuchi \cite[Table $1$]{Mor:07b}, then the examples given in Figs.\ \ref{fig:trivial_I}, \ref{fig:trivial_II} can be generalized to produce an infinite family of mutually inequivalent $T$-tangles whose corresponding handlebody-knots are all equivalent.
   
If $\Gamma$ is the simplest non-prime, non-trivial spatial handcuff graph, namely, Fig.\ \ref{fig:hc_fourtwo} (see Bellettini-Paolini-Paolini-Wang \cite[Table $4$]{BelPaoPaoWan:23}), the uniqueness of $\mathcal{P}_3$-decomposition in Bellettini-Paolini-Wang \cite{BelPaoWan:25} implies that $\alpha,\beta$ are equivalent if and only if $V_\alpha,V_\beta$ are equivalent. This suggests that the more complicated, for instance, of greater crossing numbers, $\Gamma$ is, 
the more likely inequivalent $T$-tangles yield inequivalent handlebody-knots. However, $\Gamma$ with fewer crossings is of more interest as they can produce handlebody-knots with small crossing numbers. Out of $90$ irreducible handlebody-knots up to seven crossings, more than $20$ (resp. around $5$) can be obtained by tangle replacement on $\hctwoone$ (resp.\ on Fig.\ \ref{fig:hc_fourtwo}).

The present work considers the case
where $\Gamma$ is the \emph{second simplest prime spatial handcuff graph, namely $\hcfourone$} (see Fig.\ \ref{fig:hc_fourone} and Moriuchi \cite[Table $1$]{Mor:07b}), and $T=B\cap \Gamma$ is a cone $\tau$ of three points $x,y,z\subset \partial B$; see Fig.\ \ref{fig:hc_fourone}. A \emph{$\tau$-tangle} is then an embedded CW-complex $\alpha$ in $B$ with $\alpha$ homeomorphic to $\tau$ and $\partial \alpha:=\alpha\cap \partial B=x\cup y\cup z$. The notion of $\tau$-tangle is introduced in Ozawa-Wang \cite{OzaWan:26} to classify $3$-decomposition of genus two handlebody-knots, and tangle replacement considered here can be regarded as a special case of $\tau\rho$-decomposition in \cite{OzaWan:26}. It is also related to the order-$3$ vertex connected sum in Wolcott \cite{Wol:87} (see Moriuchi \cite{Mor:07b}) and the prime decomposition in Motohashi \cite{Moto:07}.

Since $\tau$ is a cone of $x,y,z$, the closure of $\Gamma-\tau$ consists of two components: one is an arc and the other contains a loop; it may be assumed the latter contains $x$. Observe that there is a self-homeomorphism $\pi$ of $(\sphere,\Gamma,B)$, induced by the rotation about the $x$-axis in Fig.\ \ref{fig:hc_fourone}, that fixes $x$ and swaps $y,z$. This motivates the following: An \emph{$x$-equivalence} of two $\tau$-tangles $\alpha,\beta$ is an orientation-preserving self-homeomorphism of $B$ that fixes $x$ and preserves $y\cup z$. An $x$-equivalence is \emph{swapping} if it swaps $y$ and $z$.
Two $\tau$-tangles are \emph{$x$-equivalent} if there is an $x$-equivalence between them. 
Since the pure mapping class group of a $3$-punctured sphere is trivial, the restriction of an $x$-equivalence on $\partial B$ is isotopic either to $\pi$ or to the identity $\id$. In particular, if $\alpha,\beta$ are $x$-equivalent, then $V_\alpha, V_\beta$ are equivalent. The main result, implying the converse, classifies all genus two handlebody-knots obtained by performing tangle replacement on $\hcfourone$. To state the result, we recall the notion of \emph{rational $\tau$-tangles} from Ozawa-Wang \cite{OzaWan:26}. 

Given $s\in\{x,y,z\}$, a \emph{canonical decomposition} of a $\tau$-tangle $\alpha$ at $s$ 
comprises two $3$-balls 
$B_L,B_R$ such that the union $B_L\cup B_R$ is $B$ and the intersection $B_L\cap B_R$ is a disk $D$ with $s=B_L\cap \partial\alpha$ and $\alpha_L:=B_L\cap\alpha$ a trivial $\tau$-tangle in $B_L$; see Fig.\ \ref{fig:rational_example}. 
Set $\alpha_R:=B_R\cap \alpha$. Then $\alpha_R$ is a $2$-string tangle in $B_R$, 
and $\alpha$ is said to be \emph{rational at $s$} if the $2$-string tangle $\alpha_R$ is rational. Note that, if $\alpha$ is rational at $s$, then it is not rational at any other point in $\partial \alpha$. We drop ``at $s$" when $s=x$.

Suppose $\alpha$ is rational at $s\in\partial\alpha$, and $(B_L,\alpha_L)\cup_D (B_R,\alpha_R)$ is the corresponding canonical decomposition at $s$.
Then the boundary of the disk $D=B_L\cap B_R$ gives a longitude of $(B_R,\alpha_R)$, while the meridian is determined only up to some Dehn twist along $\partial D\subset \partial B_R$. Thus $(B_R,\alpha_R)$ can be expressed as in Figs.\ \ref{fig:odd} and \ref{fig:even}. \emph{The slope} $r$ of $\alpha$ is the continued fraction $[a_1,\dots, a_n,0]$, and is well-defined modulo $\mathbb{Z}$. A rational $\tau$-tangle (at $x$) with a slope $r$ is called $r$-rational. For instance, the $\tau$-tangle in Fig.\ \ref{fig:rational_example} is $\frac{1}{3}$-rational. Denote by $V_r$ the handlebody-knot obtained by performing tangle replacement on $\Gamma$ with an $r$-rational tangle.

\begin{figure}[t]
	\begin{subfigure}{.32\linewidth}
		\centering
		\begin{overpic}[scale=.11,percent]{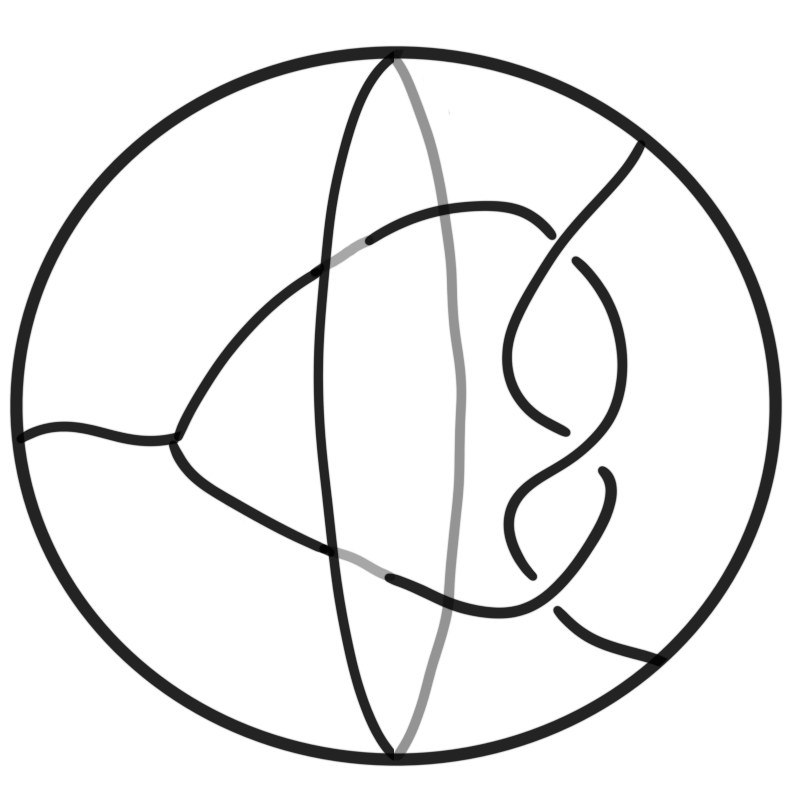}
			\put(46,50){$D$}
			\put(58,80){$B_R$}
			\put(27,80){$B_L$}
		\end{overpic}
		\caption{Rational $\tau$-tangle.}
		\label{fig:rational_example}
	\end{subfigure}
	\begin{subfigure}{.32\linewidth}
		\centering
		\begin{overpic}[scale=.12,percent]{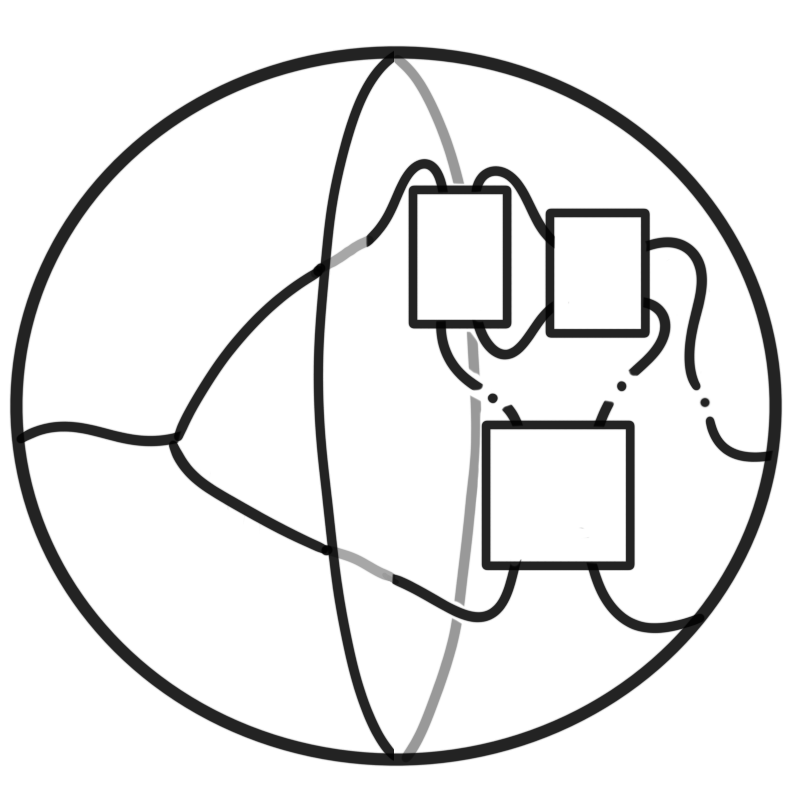}
			\put(67,36){$a_n$}
			\put(71,65){$a_2$}
			\put(54,67){$a_1$}
		\end{overpic}
		\caption{Odd $n$.}
		\label{fig:odd}
	\end{subfigure}
	\begin{subfigure}{.32\linewidth}
		\centering
		\begin{overpic}[scale=.12,percent]{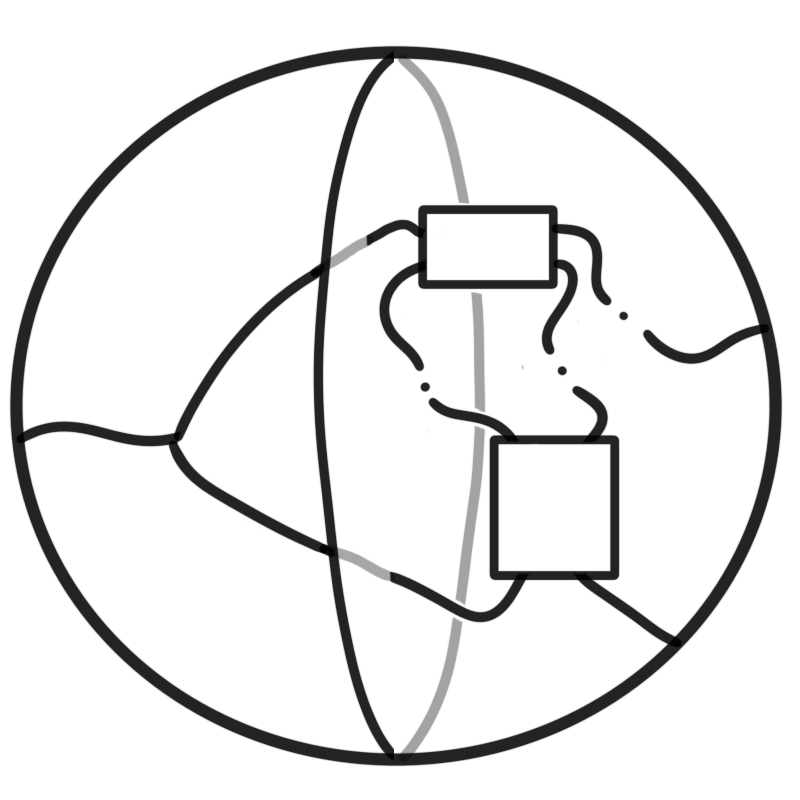}
			\put(58,67){$a_1$}
			\put(65,34){$a_n$}
		\end{overpic}
		\caption{Even $n$.}
		\label{fig:even}
	\end{subfigure} 
	\caption{}
\end{figure} 
  
A $\tau$-tangle $\alpha$ is \emph{atoroidal} if its exterior $\Compl{\alpha}:=B-\openrnbhd{\alpha}$
contains no incompressible torus, where $\openrnbhd{\alpha}$ is the interior of a regular neighborhood of $\alpha$ in $B$.
Let $\mathcal{T}$ be the set of atoroidal $\tau$-tangles up to $x$-equivalence, and $\overline{V}$ be the mirror image of a handlebody-knot $V$. Our first result classifies $V_\alpha$ and determines its chirality.  
 
\begin{theorem}\label{teo:classification}
Members in the family of handlebody-knots:
\[ 
\{
\overline{V_\alpha}, V_\alpha, V_{\frac{1}{5}}=\overline{V_\frac{1}{5}}\mid \alpha\in\mathcal{T}-\{\alpha_{\frac{1}{5}}\}
\} 
\]
are mutually inequivalent. 
\end{theorem} 

A handlebody-knot $V$ is \emph{irreducible} if there is no $2$-sphere in $\sphere$ meeting $V$ in an essential disk. By Tsukui \cite{Tsu:75}, a genus two handlebody-knot is irreducible if and only if its exterior is $\partial$-irreducible. Bellettini-Paolini-Paolini-Wang \cite{BelPaoPaoWan:25p} enumerates and distinguishes all genus two irreducible handlebody-knots of seven crossings except for the two pairs $(\sixtwelve,\seventhirtynine)$ and $(\sevenfiftynine,\sevensixty)$; see Figs.\ \ref{fig:sixtwelve}, \ref{fig:seventhirtynine} and Figs.\ \ref{fig:sevenfiftynine}, \ref{fig:sevensixty}, respectively. 
Each pair has the form $(V_\alpha,\overline{V_{\bar{\alpha}}})$, where $\bar{\alpha}$ is the mirror image of $\alpha$ 
about the plane where the diagram lies. 
For $(\sixtwelve,\seventhirtynine)$, the $\tau$-tangle $\alpha$ is $\frac{2}{5}$-rational; see Fig.\ \ref{fig:two_five_rational} and Figs.\ \ref{fig:V_tau_sixtwelve}, \ref{fig:V_tau_seventhirtynine}, while for $(\sevenfiftynine,\sevensixty)$, $\alpha$ is in Fig.\ \ref{fig:irrational_one_three}, which is not rational at $x$; see Figs.\ \ref{fig:sevenfiftynine}, \ref{fig:sevensixty}. On the other hand, the slope of $\bar{\alpha}$ for an $r$-rational $\tau$-tangle $\alpha$ at $s\in\partial\alpha$ is $-r$, so $V_\alpha$, $\overline{V_{\bar{\alpha}}}$ are never equivalent by Theorem \ref{teo:classification}. Again by Theorem \ref{teo:classification}, if $V_\alpha, V_{\bar{\alpha}}$ are equivalent, then $\alpha, \bar{\alpha}$ are $x$-equivalent, which is not the case when $\alpha$ is $\frac{2}{5}$-rational or the one in Fig.\ \ref{fig:irrational_one_three}. Therefore, we have the corollary.
\begin{corollary}
$\sixtwelve,\seventhirtynine$ (resp.\   $\sevenfiftynine,\sevensixty$) are inequivalent, up to mirror image.   
\end{corollary} 
Note that the pair of handlebody-knots $(\fivetwo, \sixthirteen)$ in Ishii-Kishimoto-Moriuchi-Suzuki \cite{IshKisMorSuz:12} is also of the form $(V_\alpha,\overline{V_{\overline{\alpha}}})$ with $\alpha$ being $\frac{1}{3}$-rational. Theorem \ref{teo:classification} therefore provides an alternative proof for the statement $\fivetwo, \sixthirteen$ are inquivalent, up to mirror image. The two cannot be distinguished by invariants computed in Ishii-Kishimoto-Moriuchi-Suzuki \cite{IshKisMorSuz:12}, and is proved with a geometric method in Lee-Lee \cite{LeeLee:12}. 
Section \ref{subsec:extension} generalizes Theorem \ref{teo:classification} to handlebody-knots obtained by performing tangle replacement on an infinite family spatial graphs; these include all handlebody-knots of the second infinite family in Lee-Lee \cite{LeeLee:12}.

\begin{figure}[t]
	\begin{subfigure}{.24\linewidth}
		\centering
		\begin{overpic}[scale=.1,percent]{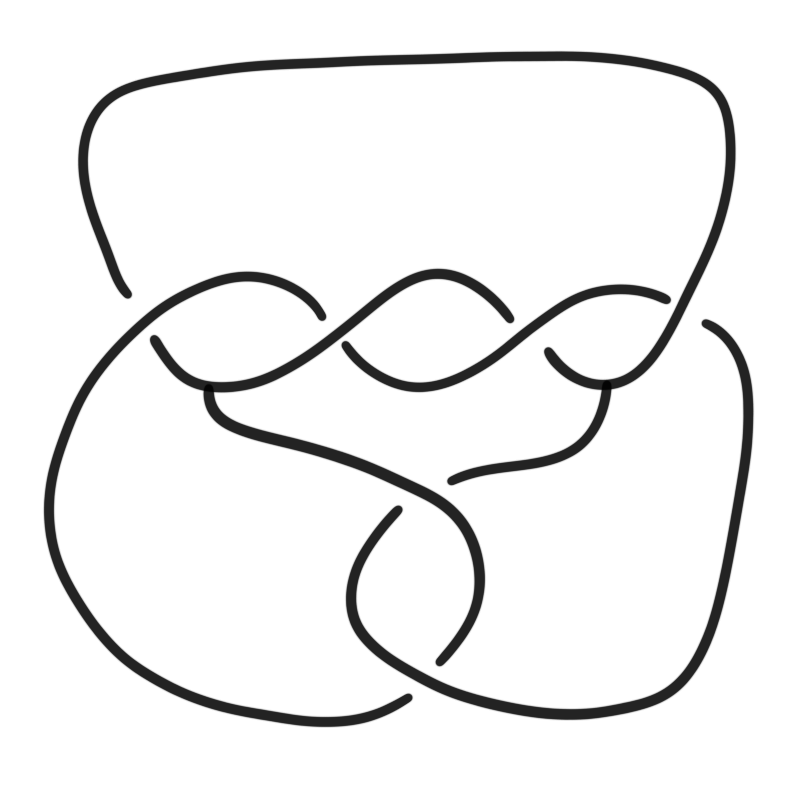}
		\end{overpic}
		\caption{$\sixtwelve$}
		\label{fig:sixtwelve}
	\end{subfigure}
	\begin{subfigure}{.24\linewidth}
		\centering
		\begin{overpic}[scale=.1,percent]{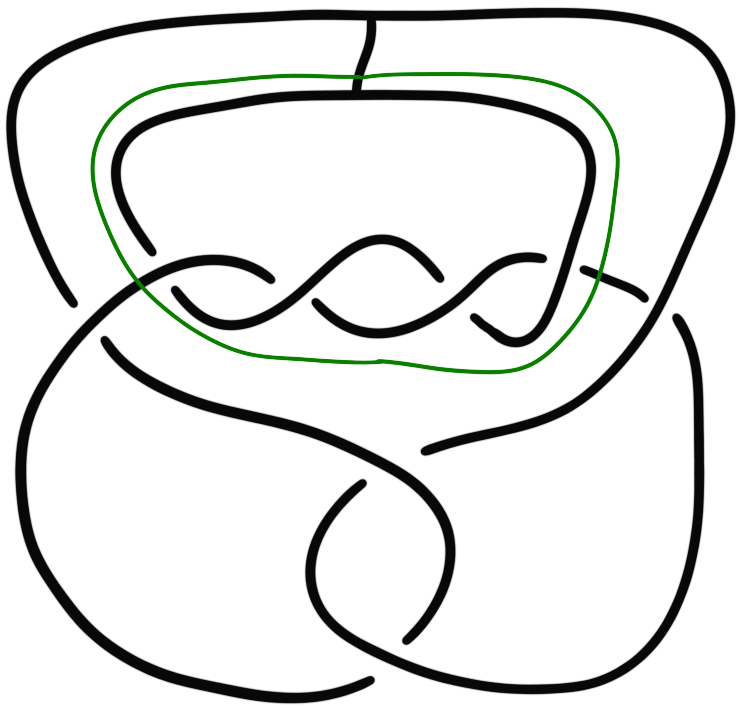}
			\put(6,79){\small $B$}
			\put(51,87){\footnotesize $x$}
			\put(17,49){\footnotesize $y$}
			\put(80,50){\footnotesize $z$}
		\end{overpic}
		\caption{$\sixtwelve$ as $V_\alpha$.}
		\label{fig:V_tau_sixtwelve}
	\end{subfigure}
	\begin{subfigure}{.24\linewidth}
		\centering
		\begin{overpic}[scale=.1,percent]{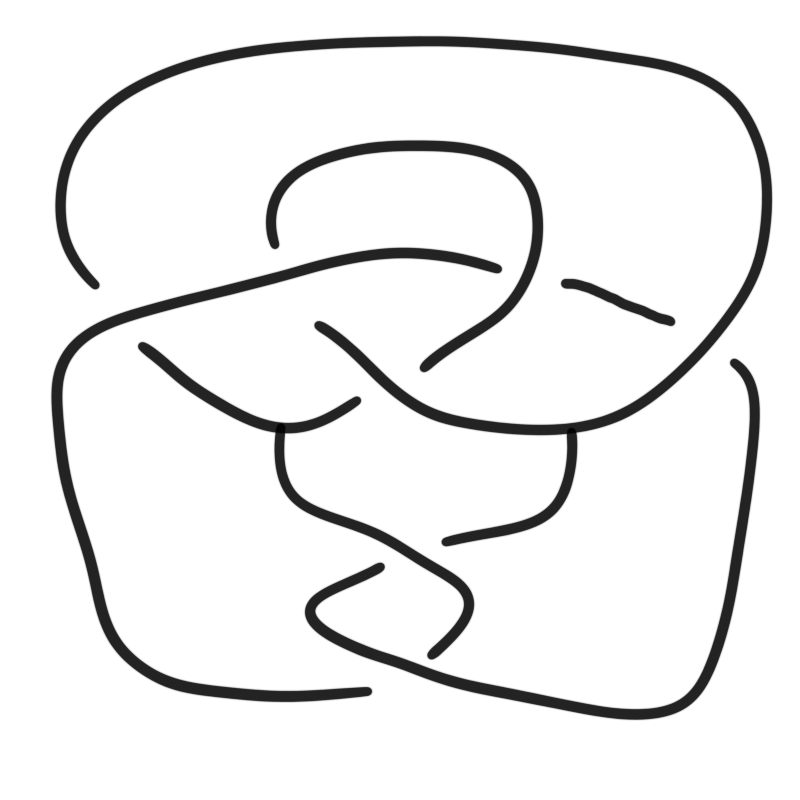}
		\end{overpic}
		\caption{$\seventhirtynine$.}
		\label{fig:seventhirtynine}
	\end{subfigure}  	
	\begin{subfigure}{.24\linewidth}
		\centering
		\begin{overpic}[scale=.1,percent]{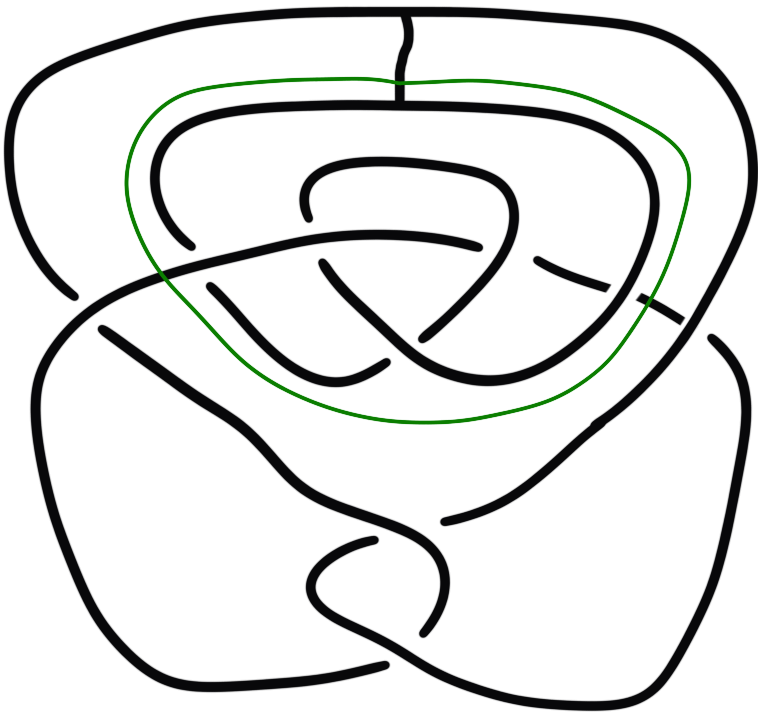}		
		\put(8,75){\small $B$}
		\put(55,84){\footnotesize $x$}
		\put(17,51){\footnotesize $y$}
		\put(88,55){\tiny $z$}
		\end{overpic}
		\caption{$\seventhirtynine$ as $\overline{V_{\bar{\alpha}}}$.}
		\label{fig:V_tau_seventhirtynine}
	\end{subfigure}  	
	\begin{subfigure}{.24\linewidth}
		\centering
		\begin{overpic}[scale=.1,percent]{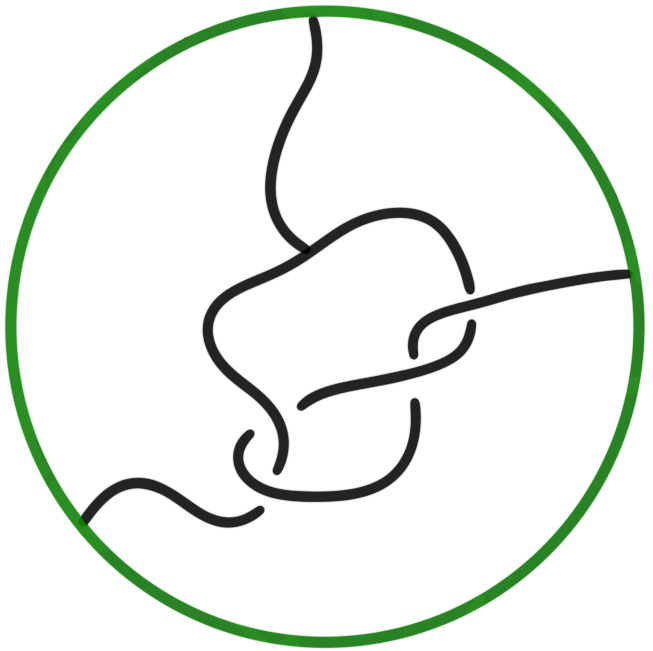}
			\put(10,27){$y$}		
			\put(88,48){$z$}		
			\put(39,89){$x$}		
		\end{overpic}
		\caption{$\frac{2}{5}$-rational $\alpha$.}
		\label{fig:two_five_rational}
	\end{subfigure}
	\begin{subfigure}{.24\linewidth}
		\centering
		\begin{overpic}[scale=.1,percent]{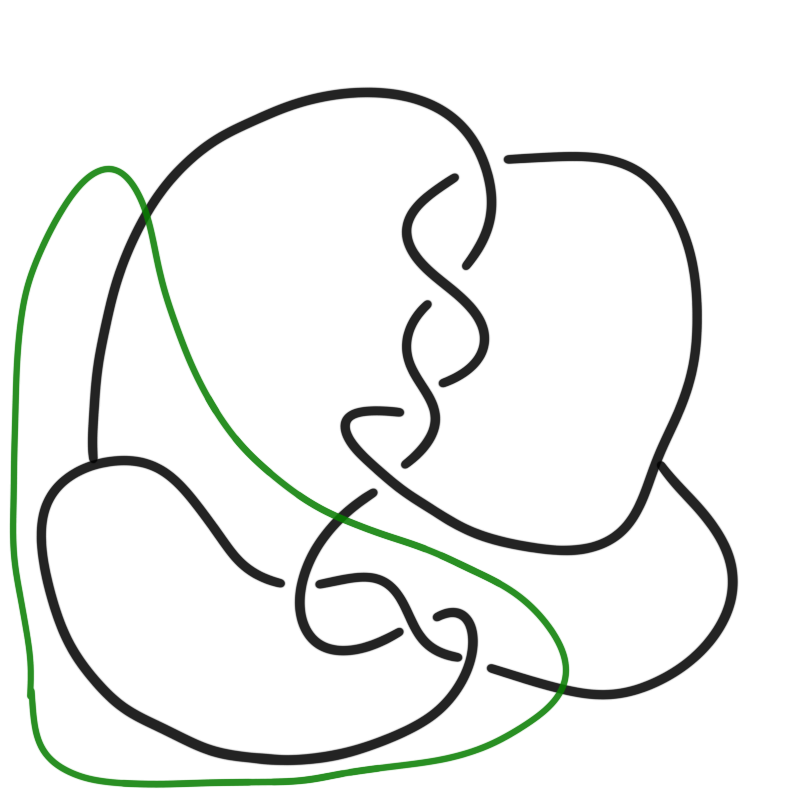}
			\put(14,47){$B$} 	
			\put(71,7){\footnotesize $x$} 	
			\put(40,39){\footnotesize $y$} 	
			\put(20,71){\footnotesize $z$} 	
		\end{overpic}
		\caption{$\sevenfiftynine$ as $V_\alpha$.}
		\label{fig:sevenfiftynine}
	\end{subfigure}
	\begin{subfigure}{.24\linewidth}
		\centering
		\begin{overpic}[scale=.1,percent ]{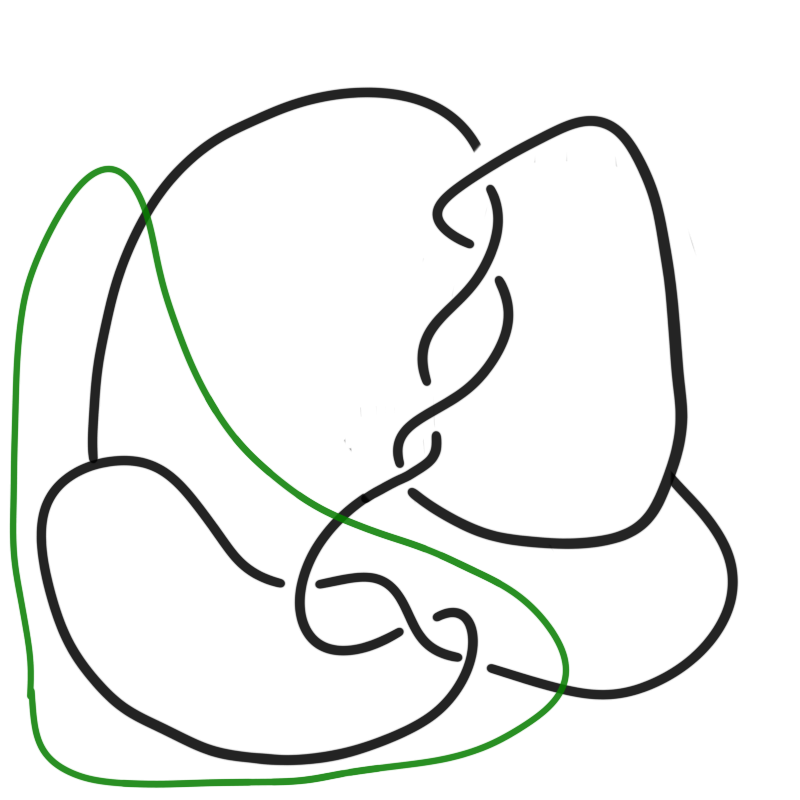}
			\put(14,47){$B$} 	
			\put(71,7){\footnotesize $x$} 	
			\put(40,39){\footnotesize $y$} 	
			\put(20,71){\footnotesize $z$} 	
		\end{overpic}
		\caption{$\sevensixty$ as $\overline{V_{\bar\alpha}}$.}
		\label{fig:sevensixty}
	\end{subfigure}
	\begin{subfigure}{.24\linewidth}
		\centering
		\begin{overpic}[scale=.1,percent]{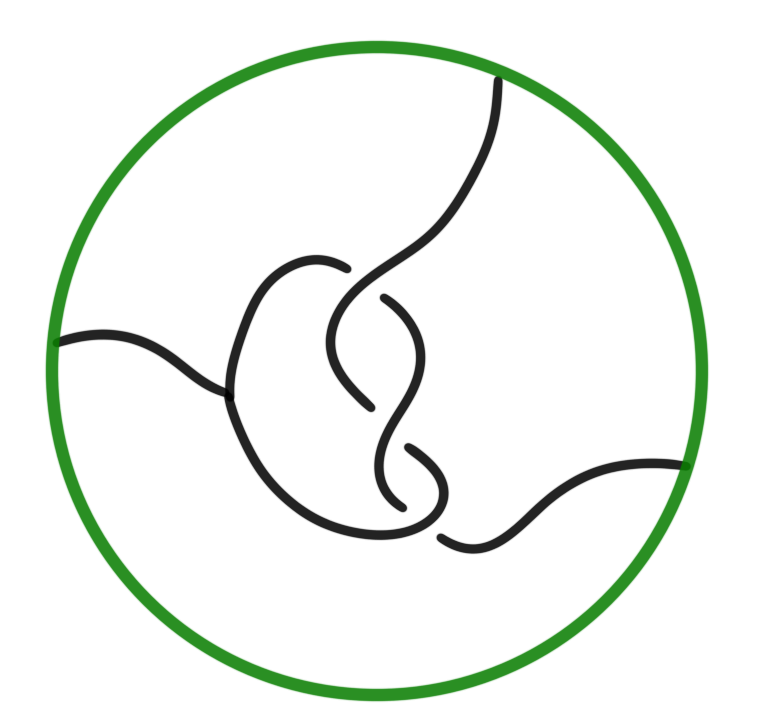}
			\put(83,35){$x$}		
			\put(57,78){$y$}		
			\put(9,52){$z$}		
		\end{overpic}
		\caption{$\alpha$ rational at $z$.}
		\label{fig:irrational_one_three}
	\end{subfigure}
	\caption{}
\end{figure}

Handlebody-knot chirality is in general hard to detect, and as yet, the chirality of genus two handlebody-knots up to six crossings is not completely determined; see Ishii-Iwakiri-Jang-Oshiro. Theorem \ref{teo:classification} implies $V_\alpha$ is chiral if and only if $\alpha$ is not $\frac{1}{5}$-rational. The next result extends this chirality characterization to a classification of the symmetry of $V_\alpha$.
The symmetry of a handlebody-knot $V$ 
is measured by the mapping class group 
\[
\mathrm{MCG}_{(+)}(\sphere,V):=\pi_0(\mathrm{Homeo}_{(+)}(\sphere,V)),
\]
called \emph{the (positive) symmetry group} of $V$, where $\mathrm{Homeo}_{(+)}(\sphere,V)$ stands for the space of (orientation-preserving) self-homeomorphisms of $(\sphere,V)$; see Koda \cite{Kod:15}. Note that $\sym{V}=\psym{V}$ if and only if $V$ is chiral. The symmetry group of classical knots, namely genus one handlebody-knots, has been extensively studied in the past decades; see Kawauchi \cite{Kaw:96}. It is now known that, if the symmetry group of a knot is finite, then it is cyclic or dihedral, and the knot is hyperbolic, torus, or a cable of a torus knot. 
Much less is known about the symmetry group of higher genus handlebody-knots. It is not even clear whether their symmetry groups are always finitely generated, for instance, the Powell conjecture \cite{Pow:80}. However, it follows from Funayoshi-Koda \cite{FunKod:20} that 
the symmetry group of a genus two handlebody-knot $V$ is finite if and only if $V$ is non-trivial and atoroidal, namely, no incompressible torus in the exterior $\Compl V:=\sphere-\mathring{V}$. Our next result classifies the symmetry group of $V_\alpha$. A $\tau$-tangle $\alpha$ is \emph{$x$-symmetric} if there is a swapping $x$-equivalence between $\alpha$ and itself.

\begin{theorem}\label{teo:symmetry} Given an atoroidal $\tau$-tangle $\alpha$, 
\begin{enumerate}[label=(\roman*)]
\item\label{itm:three} if $\alpha$ is $\frac{1}{3}$-rational, then 
\[\mathrm{MCG}(\sphere, V_\alpha)=\mathrm{MCG}_+(\sphere,V_\alpha)=\mathbb{Z}_2\times \mathbb{Z}_2;\]
\item\label{itm:five} if $\alpha$ is $\frac{1}{5}$-rational, then 
\[\mathrm{MCG}(\sphere, V_\alpha)=\mathbb{Z}_2\times \mathbb{Z}_2,\quad \mathrm{MCG}_+(\sphere, V_\alpha)=\mathbb{Z}_2;\]
\item\label{itm:x_symmetric} if $\alpha$ is $x$-symmetric but not $\frac{1}{3}$- or $\frac{1}{5}$-rational, then 
\[\mathrm{MCG}(\sphere, V_\alpha)= \mathrm{MCG}_+(\sphere, V_\alpha)=\mathbb{Z}_2;\]
\item\label{itm:not_x_symmetric} if $\alpha$ is not $x$-symmetric, then 
\[\mathrm{MCG}(\sphere, V_\alpha)= \mathrm{MCG}_+(\sphere, V_\alpha)=\Zone,\]
\end{enumerate}
where $\Zone$ is the trivial group. 
\end{theorem}

As an application, we determine the symmetry group of $\sixtwelve$. This, together with the results from Koda \cite{Kod:15}, Wang \cite{Wan:22, Wan:23, Wan:24ii}, Bellettini-Paolini-Wang \cite{BelPaoWan:25}, Koda-Ozawa-Wang \cite{KodOzaWan:25i}, and  Ozawa-Wang \cite{OzaWan:26}, determines the symmetry groups of all atoroidal, non-hyperbolic handlebody-knots, up to six crossings; see Table \ref{tab:symmetry_up_to_six}. 
The symmetry groups of several seven-crossing handlebody-knots are also computed; see Table \ref{tab:symmetry_seven}. The two tables follow from Theorem \ref{teo:symmetry} because, viewing them as $V_\alpha$, up to mirror image, we have $\alpha$ is $\frac{2}{5}$-rational for $\sixtwelve, \seventhirtynine$ (Fig.\ \ref{fig:two_five_rational}) and is $\frac{2}{7}$- and $\frac{3}{7}$-rational for $\seventhirtysix, \seventhirtyeight$ (Figs.\ \ref{fig:seventhirtysix}, \ref{fig:seventhirtyeight}), and rational $\tau$-tangles are $x$-symmetric.
For $\sevenfiftynine, \sevensixty$, the $\tau$-tangle $\alpha$ is not $x$-symmetric (Fig.\ \ref{fig:irrational_one_three}).

\begin{center}
	\begin{table}[t]
		\caption{Table of symmetry groups.}
		\parbox{.45\linewidth}{
			\centering 
			\subcaption{Up to six crossings.}
			\label{tab:symmetry_up_to_six}
			\begin{tabular}{ |c|c|c|c| }
				\hline
				& $\mathrm{MCG}_+$ & $\mathrm{MCG}$ &  \\ 
				\hline 
				$\mathbf{4_1}$ & $\Ztwo$ & $\Ztwo\times \Ztwo$ & \cite{Wan:24ii} \\ 
				\hline
				$\mathbf{5_1}$ & $\Zone$ & $\Zone$ & \cite{Kod:15}\\
				\hline
				$\mathbf{5_2}$ & $\Ztwo\times \Ztwo$ & $\Ztwo\times \Ztwo$ & \cite{Wan:23}\\
				\hline
				$\mathbf{6_1}$ & $\Zone$ & $\Zone$ & \cite{Kod:15}\\
				\hline
				$\mathbf{6_4}$ & $\Ztwo$ & $\Ztwo$ & \cite{Wan:23}\\
				\hline
				$\mathbf{6_{10}}$ & $\Ztwo$ & $\Ztwo$ & \cite{BelPaoWan:25}\\
				\hline	
				$\mathbf{6_{11}}$ & $\Zone$ & $\Zone$ & \cite{Wan:22}\\
				\hline	
				$\mathbf{6_{12}}$ & $\Ztwo$ & $\Ztwo$ & \\
				\hline	
				$\mathbf{6_{13}}$ & $\Ztwo$ & $\Ztwo$ & 
				\cite{KodOzaWan:25i}\\
				\hline
			\end{tabular}
		}
		\parbox{.45\linewidth}{
			\centering 
			\subcaption{Seven crossings.}
			\label{tab:symmetry_seven}
			\begin{tabular}{ |c|c|c| }
				\hline
				& $\mathrm{MCG}_+$ & $\mathrm{MCG}$ \\
				\hline 
				$\mathbf{7_{32}}$ & $\Ztwo$ & $\Ztwo\times \Ztwo$ \\ 
				\hline 
				$\mathbf{7_{36}}$ & $\Ztwo$ & $\Ztwo$ \\ 
				\hline
				$\mathbf{7_{38}}$ & $\Ztwo$ & $\Ztwo$ \\ 
				\hline		
				$\mathbf{7_{39}}$ & $\Ztwo$ & $\Ztwo$ \\ 
				\hline	
				$\mathbf{7_{59}}$ & $\Zone$ & $\Zone$ \\
				\hline
				$\mathbf{7_{60}}$ & $\Zone$ & $\Zone$ \\
				\hline
			\end{tabular}
		}	
	\end{table}
\end{center}

\subsection*{Conventions}
We work in the piecewise linear category. Given a subpolyhedron $X$ of a $3$-manifold $M$, $\mathring{X}$ and $\mathfrak{N}(X)$ denote the interior and a regular neighborhood of $X$ in $M$, respectively.
The \emph{exterior} $\Compl X$ of $X$ in $M$ is defined to be $M-\openrnbhd{X}$ 
if $X\subset M$ is of positive codimension, and to be $M-\mathring{X}$ otherwise. 
By $\vert X\vert$, we understand the number of components in $X$. Submanifolds are assumed to be proper and in general position.
A surface $S\subset M$ with non-positive Euler characteristic is \emph{essential} if it is incompressible, $\partial$-incompressible and non-boundary-parallel.

\section{Tangle replacement on $\hcfourone$}\label{sec:rigidity}
\begin{figure}[b]
	\begin{subfigure}[b]{.25\linewidth}
		\centering
		\begin{overpic}[scale=.1,percent]{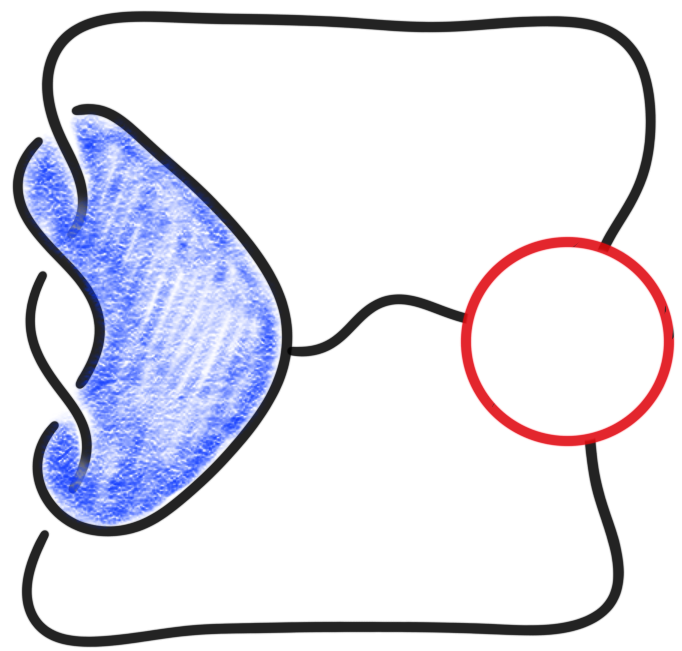}
			\put(25,43){$S$}
			\put(80,43){\Large $\alpha$} 
		\end{overpic}
		\caption{Spine $\Gamma_\alpha$ of $V_\tau$.}
		\label{fig:fourone_tau}
	\end{subfigure} 
	\begin{subfigure}[b]{.73\linewidth}
		\centering
		\begin{overpic}[scale=.1,percent]{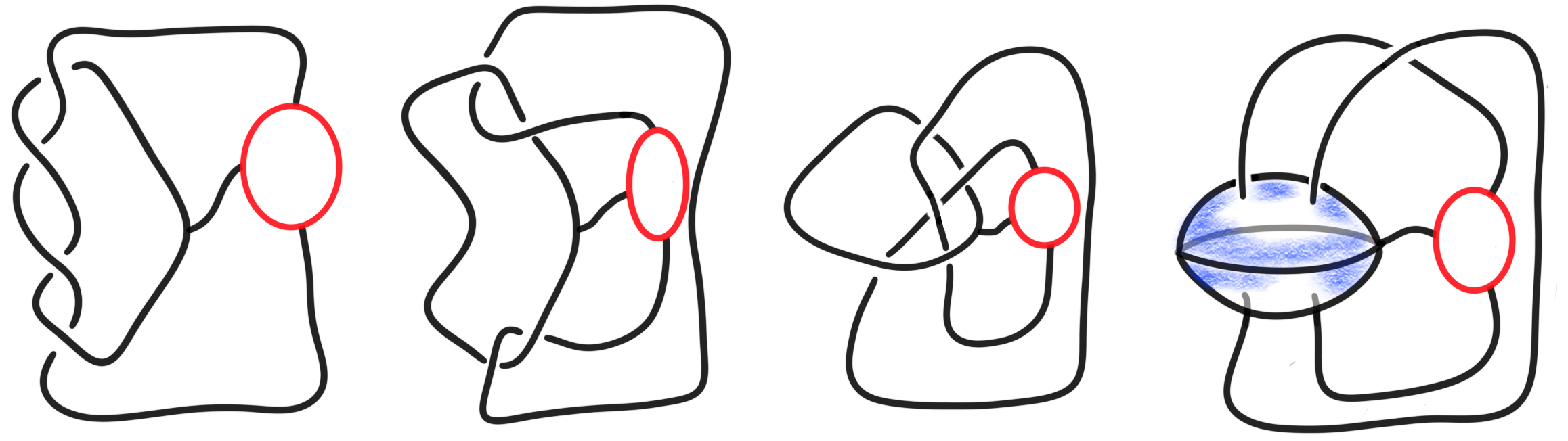}
			\put(17.5,16){\Large $\alpha$}
			\put(40.9,15){$\alpha$}
			\put(65.7,14){$\alpha$}
			\put(93,11.8){$\alpha$}
			\put(79.6,13.7){\footnotesize $S_+$}
			\put(79.7,8.6){\tiny $S_-$}
		\end{overpic}
		\caption{Deform and split.}
		\label{fig:deformation}
	\end{subfigure}
	\begin{subfigure}[b]{.36\linewidth}
		\centering
		\begin{overpic}[scale=.15,percent]{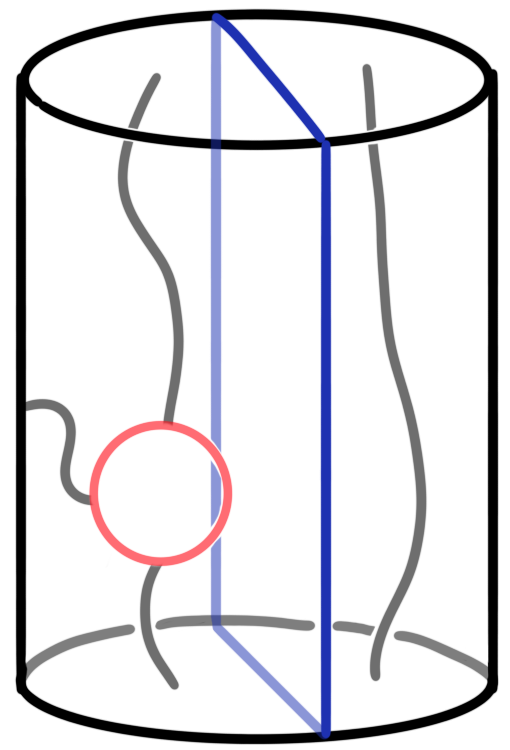}
			\put(19,90.5){\tiny $b_+$}
			\put(45.7,91.5){\tiny $a_+$}
			\put(60,95){\tiny $c_+$}
			\put(23.5,6.5){\tiny $a_-$}
			\put(51,6.7){\tiny $b_-$}
			\put(63,2.5){\tiny $c_-$}
			\put(31,50){$Q$}
			\put(7.2,87){\footnotesize $S_+$}
			\put(7.2,7.5){\footnotesize $S_-$}
			\put(18,31){$\alpha$}
		\end{overpic}
		\caption{Rectangle $Q$ in $\Split{V_\alpha}$.}
		\label{fig:split_manifold}
	\end{subfigure}
	\begin{subfigure}{.36\linewidth}
		\centering
		\begin{overpic}[scale=.15,percent]{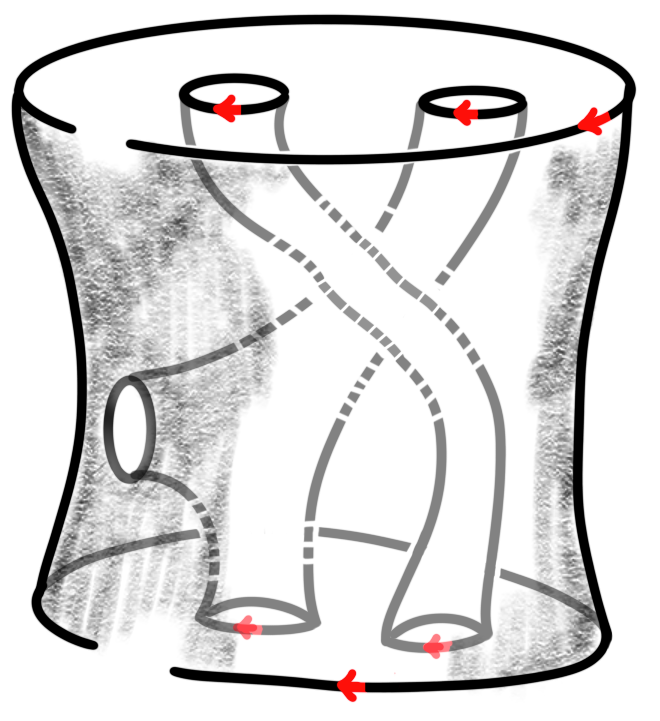}
			\put(40,89){\footnotesize $a_+$}
			\put(70,88.7){\footnotesize $b_+$}
			\put(10.4,81.2){\footnotesize $c_+$}		\put(40,7.4){\footnotesize $a_-$}
			\put(71,9){\footnotesize $b_-$}
			\put(15.2,7){\footnotesize $c_-$}
			\put(25,55){$P$}
			\put(62.5,30){\transparent{.8}{$F$}}		
		\end{overpic}
		\caption{Orientation of $a_\pm, b_\pm, c_\pm$.}
		\label{fig:orientation}
	\end{subfigure}
	\begin{subfigure}{.25\linewidth}
		\centering
		\begin{overpic}[scale=.12,percent]{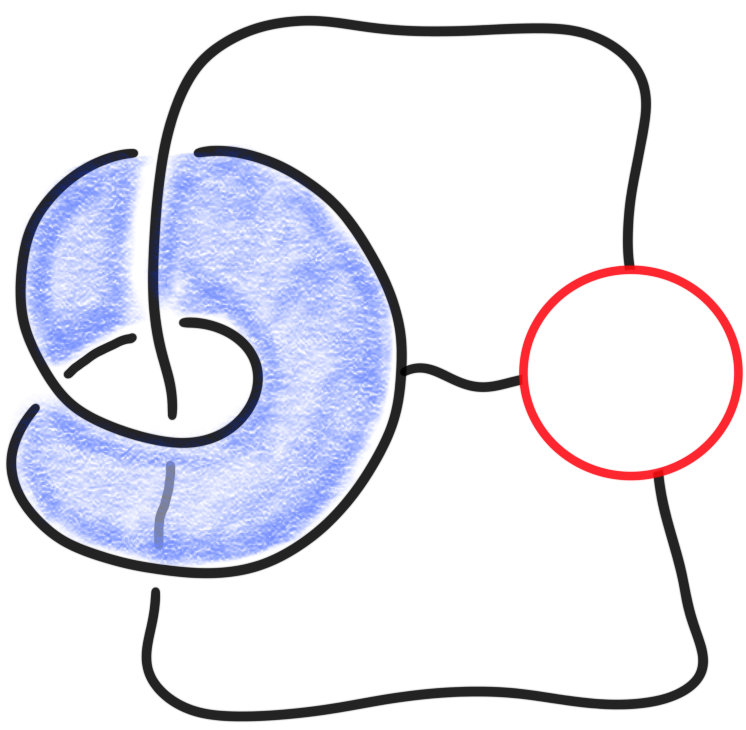}
			\put(25,62){\small $M$}
			\put(81,45){\Large $\alpha$}
		\end{overpic}
		\caption{M\"obius band $M_\ast$.}
		\label{fig:mobius}
	\end{subfigure}
	
	\caption{}
\end{figure}

Throughout the section, $\Gamma_\alpha$ denotes the spatial graph obtained by performing tangle replacement on $\hcfourone$ with a $\tau$-tangle $\alpha\subset B$ (see Fig.\ \ref{fig:fourone_tau}),  and $V_\alpha$ denotes the induced handlebody-knot. Let $S_B$ be the pair of pants $\Compl{V_\alpha}\cap \partial B$ in the exterior $\Compl{V_\alpha}$ of $V_\alpha$ in $\sphere$, and note that $S_B\subset \Compl{V_\alpha}$ is incompressible.

\subsection{The exterior of $V_\alpha$}
Here we study the topology of the exterior $\Compl{V_\alpha}$, and determine the $\partial$-irreducibility and atoroidality of $\Compl{V_\alpha}$, and classify essential annuli it admits in terms of $\alpha$. Recall that $\Compl{\alpha}$ is the exterior of $\alpha$ in $B$.

\subsubsection{Atoroidality and irreducibility}
\begin{lemma}\label{lm:atoroidality}
The $\tau$-tangle $\alpha$ 
is atoroidal if and only if the handlebody-knot $V_\alpha$ is atoroidal.		
\end{lemma}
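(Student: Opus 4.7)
The plan is to split $E(V_\tau)$ along the incompressible three-punctured sphere $S_B$ into the tangle exterior $E_B := B \cap E(V_\tau)$ and the outside piece $E_\mathrm{out} := \overline{E(V_\tau) - E_B}$, noting that $E_\mathrm{out}$ depends only on the handcuff graph $\fourone$ and not on the replacement tangle $\tau$. Two structural observations will be used repeatedly: the components of $\partial E_B$ are both planar (the three-punctured sphere $S_B$ together with $\partial\rnbhd{\tau}\cap B$), so no torus in $E_B$ can be boundary-parallel in $E_B$; and $\partial V_\tau$ is a closed genus-two surface, which contains no torus subsurface, so no torus in $E(V_\tau)$ can be boundary-parallel in $E(V_\tau)$.

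For the implication ``$V_\tau$ atoroidal $\Rightarrow$ $(B,\tau)$ atoroidal,'' I argue the contrapositive. Given an essential torus $T' \subset E_B$, any hypothetical compressing disk $D$ for $T'$ in $E(V_\tau)$ can, via the innermost-circle trick using incompressibility of $S_B$, be taken disjoint from $S_B$; it then lies in $E_B$, contradicting essentiality of $T'$ there, or in $E_\mathrm{out}$, impossible because $\partial D \subset T' \subset E_B$. Together with the boundary observation in the first paragraph, $T'$ persists as an essential torus in $E(V_\tau)$, contradicting atoroidality.

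For the converse, let $T \subset E(V_\tau)$ be essential, chosen to minimize $|T \cap S_B|$ over its isotopy class. Standard innermost-disk and outermost-arc reductions using incompressibility of both $T$ and $S_B$ force every component of $T \cap S_B$ to be essential in $S_B$, hence parallel to one of the three boundary circles of $S_B$. When $T \cap S_B = \emptyset$, either $T \subset E_B$ and the symmetric argument yields an essential torus in $E_B$, or $T \subset E_\mathrm{out}$, in which case the same disk-transfer argument produces an essential torus in $E(V_{\tau_0}) = E(\fourone)$, contradicting the known atoroidality of $\fourone$. The main obstacle is the case $T \cap S_B \neq \emptyset$: then $T$ decomposes into annular pieces lying alternately in $E_B$ and $E_\mathrm{out}$, and the plan is to classify the annular components inside the fixed, explicit region $E_\mathrm{out}$; each such annulus should be either boundary-parallel in $E_\mathrm{out}$ (so that pushing it across $S_B$ strictly decreases $|T \cap S_B|$, contradicting minimality) or capable of being capped off by the adjacent annulus in $E_B$ to yield an essential torus lying entirely inside $E_B$. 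The technical nub is ruling out the mixed annular configurations without invoking Theorem \ref{intro:teo:classification}, and this is where a careful case analysis of how annuli in $E_\mathrm{out}$ can meet the three boundary circles of $S_B$ will be required.
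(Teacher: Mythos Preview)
Your overall architecture---splitting along the incompressible $S_B$ and using innermost-circle arguments to transfer compressing disks---is the same as the paper's, and your treatment of the direction ``$(B,\tau)$ toroidal $\Rightarrow V_\tau$ toroidal'' is essentially identical.

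The divergence is in the converse when $T\cap S_B\neq\emptyset$. You propose to classify the annular pieces of $T$ lying in the \emph{outside} region $E_{\mathrm{out}}$, and you acknowledge this as the ``technical nub'' requiring a case analysis you have not carried out. The paper instead looks at the annular pieces on the \emph{tangle side} $E_B=\Compl\tau$, where the argument collapses to two lines: since any two of the three boundary circles of $S_B$ are homologically independent in $\Compl\tau$, each annulus $A\subset T\cap B$ has \emph{parallel} boundary components in $S_B$; hence $A$ together with the sub-annulus of $S_B$ it cobounds forms a torus bounding either a solid torus (so $A$ is $\partial$-parallel, reducing $|T\cap S_B|$) or a non-trivial knot exterior (whose boundary is an incompressible torus disjoint from $S_B$, again contradicting minimality). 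No graph-specific case analysis is needed. Your proposed ``capping off by the adjacent annulus in $E_B$'' does not obviously produce a torus in $E_B$, and on the $E_{\mathrm{out}}$ side the homological constraint is weaker (for instance $[y]=-[z]$ there, so annuli running from $y$ to $z$ are not excluded a priori), so the case analysis really would be substantial.

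A second simplification you miss: for the subcase $T\subset E_{\mathrm{out}}$ you invoke atoroidality of $\fourone$. The paper avoids this by observing directly that $V_\tau\cup B$ is a trivial handlebody-knot, so $E_{\mathrm{out}}$ is a genus-two handlebody and contains no incompressible torus at all.
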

\begin{proof} 
\textbf{Case $1$: $\alpha$ is trivial.} Then $V_\alpha$ is trivial, so both $\Compl{\alpha}$, $\Compl{V_\alpha}$ are handlebodies, and therefore atoroidal. 

\textbf{Case $2$: $\alpha$ is non-trivial.} If there is an incompressible torus $T$ in $\Compl \alpha$, then, by the incompressibility of $S_B$, every compressing disk of $T$ in $\Compl{V_\alpha}$ can be isotopied into $\Compl\alpha$, a contradiction, so $T$ is incompressible also in $\Compl{V_\alpha}$. 

Conversely, if there is an incompressible torus in $\Compl{V_\alpha}$, 
then we can choose one, say $T$, so that $\vert T\cap \partial B \vert $ is minimized. By the incompressibility of $S_B$, $B\cap T$ consists of some annuli. Since $\alpha$ is a $\tau$-tangle, 
every annulus $A\subset B\cap T$ has parallel boundary components in $S_B$, and hence $A$ is either $\partial$-parallel in $\Compl\alpha$ or cut off from $\Compl\alpha$ a non-trivial knot exterior; either case contradicts the minimality. 
As a result, $T$ is disjoint from $\partial B$. Since $V_\alpha\cup B$ is a trivial handlebody-knot, $T$ is necessarily in $B$, and hence is an incompressible torus in $\Compl \alpha$.   
\end{proof}
 
\begin{lemma}\label{lm:non_trivial_essen_irreducible}
Suppose $\alpha$ is atoroidal. Then 
the following are equivalent: 
\begin{enumerate}[label=(\roman*)]
\item\label{itm:X_non_trivial} $\alpha$ is non-trivial.
\item\label{itm:SB_essen} $S_B$ is essential.
\item\label{itm:irreducible} $V_\alpha$ is an irreducible handlebody-knot.
\end{enumerate}
\end{lemma}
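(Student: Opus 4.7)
The plan is to establish the three-way equivalence via the implications (i)$\Rightarrow$(ii), (ii)$\Rightarrow$(iii), and (iii)$\Rightarrow$(i). Throughout I decompose $\Compl{V_\tau}=X_B\cup_{S_B}X_B^c$, where $X_B:=B\cap\Compl{V_\tau}$ is the $\tau$-tangle exterior and $X_B^c:=\Compl{B}\cap\Compl{V_\tau}$, and exploit the fact, already noted, that $S_B$ is incompressible; under incompressibility, essentiality reduces to ruling out $\partial$-parallelism in $\Compl{V_\tau}$.

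For (i)$\Rightarrow$(ii) I would argue contrapositively: if $S_B$ is $\partial$-parallel, the parallelism region $R\cong S_B\times I$ lies on one side of $S_B$. If $R\subset X_B$, then $X_B$ collapses onto a collar of $S_B$ and $\tau$ is isotopic to a trivial tripod inside a proper disk of $B$, so $(B,\tau)$ is trivial. If instead $R\subset X_B^c$, pushing $\partial V_\tau\cap\Compl{B}$ across $R$ would isotope $V_\tau$ into $B$, embedding the spatial handcuff graph $\fourone$ inside a $3$-ball and contradicting its non-trivial knotting in $\sphere$. Either case contradicts the assumption, proving (i)$\Rightarrow$(ii).

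For (ii)$\Rightarrow$(iii), suppose $V_\tau$ is reducible and let $D\subset\Compl{V_\tau}$ be an essential disk, isotoped so that $|D\cap S_B|$ is minimal. Incompressibility of $S_B$ kills all circle intersections, so $D\cap S_B$ is a disjoint union of arcs. An outermost arc on $D$ cuts off a half-disk $D_0$ that is a $\partial$-compressing disk for $S_B$ on one side. A case analysis on how $\partial D_0\cap S_B$ sits on the $3$-punctured sphere $S_B$---which pair of boundary circles it separates, or whether it joins one to itself---combined with whether $D_0\subset X_B$ or $D_0\subset X_B^c$, shows that the $\partial$-compression either yields a genuine compressing disk for $S_B$ (contradicting incompressibility) or makes $S_B$ $\partial$-parallel, contradicting the essentiality of $S_B$. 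The remaining direction (iii)$\Rightarrow$(i) I handle contrapositively: if $(B,\tau)$ is trivial, then $V_\tau$ is isotopic to $V_{\tau_0}$, and the specific embedding of $\fourone$ (together with the trivial $\tau$-tangle at the chosen trivalent vertex) provides an explicit essential disk in $\Compl{V_{\tau_0}}$, so $V_\tau$ is reducible.

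The main obstacle is the recurring ``$\Compl{B}$-side'' case, both in (i)$\Rightarrow$(ii) and in the outermost-arc analysis for (ii)$\Rightarrow$(iii). Here the general cut-and-paste machinery does not suffice; one must rule out a nontrivial product or bigon in $X_B^c$ by invoking the specific embedding of $\Gamma\setminus\tau_0$ in $\Compl{B}$. I expect this step to be carried out by a normal-position argument for $S_B$ in $X_B^c$ against a fixed diagram of $\fourone$, making essential use of the incompressibility of the frontier annuli of the edges of $\Gamma$ that lie outside $B$.
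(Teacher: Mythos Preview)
Two points where the proposal diverges from a complete proof.

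\textbf{Essentiality.} In this paper ``essential'' means incompressible and $\partial$-incompressible; the authors make this explicit by writing ``if $S_B$ is inessential, then $S_B$ is $\partial$-compressible.'' A $\partial$-compressible $3$-punctured sphere need not be $\partial$-parallel, so your (i)$\Rightarrow$(ii) argument---ruling out a product region on either side---does not establish what is needed. The paper argues instead that any $\partial$-compressing disk for $S_B$ must lie in $B$ (the outside piece $X_B^c$ depends only on $\fourone$, not on $\tau$, and one checks directly that $S_B$ is $\partial$-incompressible from that side); then, inside $B$, a short case split on whether the arc on $S_B$ meets one or two components of $\partial S_B$, combined with atoroidality of $(B,\tau)$, forces $(B,\tau)$ to be trivial. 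This dispatches your ``$\Compl B$-side'' obstacle at the outset rather than at the end.

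\textbf{(ii)$\Rightarrow$(iii).} Your outermost-arc approach does not work as stated: under hypothesis (ii), $S_B$ is $\partial$-incompressible, so after minimising $\vert D\cap S_B\vert$ there are no arcs at all and $D$ lies entirely in $X_B$ or in $X_B^c$---exactly the situation you flag as unresolved. The paper avoids this entirely by invoking Lemma~\ref{lm:atoroidality}: since $(B,\tau)$ is atoroidal, so is $V_\tau$; a reducible atoroidal genus-two handlebody-knot is trivial, whence $\Compl{V_\tau}$ is a handlebody and contains no essential $3$-punctured sphere. Your (iii)$\Rightarrow$(i) matches the paper.
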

\begin{proof}
\ref{itm:SB_essen}$\Rightarrow$\ref{itm:irreducible}: If $V_\alpha$ is reducible, then by Lemma \ref{lm:atoroidality}, $V_\alpha$ is atoroidal. By Tsukui \cite{Tsu:75}, $V_\alpha$ is trivial and thus $\Compl{V_\alpha}$ is a handlebody. The only essential surfaces in a handlebody are meridian disks, so $S_B$ is inessential.
\ref{itm:irreducible}$\Rightarrow$\ref{itm:X_non_trivial}: If $\alpha$ is trivial, then $V_\alpha$ is trivial and hence reducible. 

\ref{itm:X_non_trivial}$\Rightarrow$ \ref{itm:SB_essen}: If $S_B$ is inessential, then $S_B$ is $\partial$-compressible, and every $\partial$-compressing disk $D$ of $S_B$ is necessarily in $B$. If $\partial D$ meets two components of $S_B$, cutting $B$ along $D$ induces a $3$-ball $B'$ meeting $\alpha$ at an arc, by the atoroidality, $(B',\alpha\cap B')$ is a trivial ball-arc pair, so $\alpha$ is trivial. Similarly, if $\partial D$ meets only one component of $S_B$, then cutting $B$ along $D$ gives us two $3$-balls $B',B''$, each of which meets	$\alpha$ at a trivial arc, again by the atoroidality. Therefore $\alpha$ is trivial.
\end{proof}

\subsubsection{Essential annuli}
Hereinafter $\tau$-tangles are assumed to be \emph{atoroidal and non-trivial}. 
Consider now the pair of pants $S\subset \Compl{V_\alpha}$ in Fig.\ \ref{fig:fourone_tau}, and denote by $a,b,c$ 
the components of $\partial S$ with $c$ the component not bounding any disk in $V_\alpha$. Note also that $S\subset\Compl {V_\alpha}$ is incompressible.
Denote by $\Split {V_\alpha}$ the $3$-manifold obtained by 
splitting $\Compl{V_\alpha}$ along $S$. The boundary of $\Split{V_\alpha}$ contains two copies $S_+,S_-$ of $S$; see Figs.\ \ref{fig:deformation}, 
\ref{fig:split_manifold}.
The boundary $\partial S$ splits the boundary $\partial V_\alpha$ into a sphere $P$ with four disjoint open disks removed and an annulus $F$, each meeting both $S_+,S_-$; see Fig.\ \ref{fig:orientation}.

Denote by $a_\pm,b_\pm,c_\pm$ the components of $\partial S_\pm$ with $x_\pm$ corresponding to the same component $x$ in $\partial S$, where $x=a,b$ or $c$. 
Then it may be assumed that $\partial P=c_+\cup c_-\cup a_-\cup b_+$, $\partial F=a_+\cup b_-$. We orient $a_\pm,b_\pm,c_\pm$ so that
$[a_\pm]+[b_\pm]=[c_\pm]\in H_1(\Split{V_\alpha})$; see Fig.\ \ref{fig:orientation}.

\begin{lemma}\label{lm:S_disjoint_annulus}
The pair of pants $S$ is essential and no essential annuli in $\Compl{V_\alpha}$ are disjoint from $S$. 
\end{lemma}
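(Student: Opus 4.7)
The plan is to prove the two parts in tandem, establishing the no-essential-annulus claim (Part~2) first so that $\partial$-incompressibility of $S$ (part of the essentiality claim) can be deduced from it.

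\textbf{Part~2.} I would assume for contradiction that $A\subset \Compl{V_\tau}$ is an essential annulus with $A\cap S=\emptyset$. Then $\partial A\subset P\cup F$, and each boundary circle is essential in $\partial V_\tau$. The relevant combinatorial facts are: essential simple closed curves in the annulus $F$ are all parallel to its core; essential curves in the $4$-punctured sphere $P$ fall into three classes indexed by the $2$-by-$2$ partitions of its boundary circles $\{c_+,c_-,a_-,b_+\}$; and two disjoint essential curves in $P$ must be parallel. Case-analyzing by the distribution of $\partial A$ between $P$ and $F$: if both components lie in $F$, they are parallel to the core, so $A$ caps with an annulus $F'\subset F$ to a torus $T\subset \Compl{V_\tau}$; by Lemma~\ref{lm:atoroidality}, $T$ compresses, and the compressing disk induces either a compressing disk for $A$ or a reducing disk for $V_\tau$, contradicting essentiality of $A$ or Lemma~\ref{lm:non_trivial_essen_irreducible}. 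If both lie in $P$, they cobound an annulus $P'\subset P$ and $A\cup P'$ is a torus, and each of the three pairings is to be ruled out by a similar atoroidality argument combined with combinatorial bookkeeping. The mixed case would be handled analogously.

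\textbf{Part~1.} Incompressibility of $S$ is already noted in the setup. For non-$\partial$-parallelism, a parallel $3$-punctured sphere $S'\subset \partial V_\tau$ with $\partial S'=\partial S$ would have to be a union of components of $\partial V_\tau\setminus \partial S$; but these components are $P$ (a $4$-punctured sphere) and $F$ (an annulus), and no combination of them is a $3$-punctured sphere, so $S$ is not $\partial$-parallel. For $\partial$-incompressibility, a $\partial$-compressing disk of $S$ would $\partial$-compress to yield either an essential disk in $\Compl{V_\tau}$ (a reducing disk for $V_\tau$, contradicting Lemma~\ref{lm:non_trivial_essen_irreducible}) or an incompressible annulus disjoint from $S$; Part~2 rules out essentiality of this annulus, and tracing the compression back forces $S$ itself to be $\partial$-parallel, already excluded.

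The main obstacle will be the case analysis in Part~2, particularly when both components of $\partial A$ lie in $P$. The three pairings of $\{c_+,c_-,a_-,b_+\}$ correspond to essentially distinct configurations, and I expect each to require its own ruling-out: minimizing $|A\cap S_B|$ for the $3$-punctured sphere $S_B=\partial B\cap \Compl{V_\tau}$ and then applying atoroidality and non-triviality of $(B,\tau)$ to eliminate residual intersection patterns. The handcuff geometry of $\fourone$ reflected in $\Split{V_\tau}$ and the rectangle $Q$ should play a central role in this case-by-case elimination.
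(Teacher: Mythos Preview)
Your overall strategy—establish Part~2 first and deduce $\partial$-incompressibility of $S$ from it—is the reverse of the paper's, and the ``tracing back'' step conceals a genuine gap. When $\partial$-compressing $S$ yields an annulus $S'$ that happens to be $\partial$-parallel, it does \emph{not} follow that $S$ was $\partial$-parallel: $S$ is recovered from $S'$ by attaching a band dual to the compression, and nothing forces that band to lie inside the product region realizing the $\partial$-parallelism of $S'$. The paper instead argues directly. Any $\partial$-compressing disk $D$ of $S$ can be taken, in $\Split{V_\tau}$, to meet only one of $S_\pm$ and to miss $F$ (since $F$ is an annulus joining $S_+$ to $S_-$); a short combinatorial check on $P$ then forces $D$ to meet the tangle ball $B$, and after minimizing $|D\cap S_B|$ each disk component of $D\cap B$ is a $\partial$-compressing disk of $S_B$, contradicting Lemma~\ref{lm:non_trivial_essen_irreducible}.

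For Part~2 your case division is correct but the weighting is inverted. The paper dispatches ``both in $F$'' by atoroidality, as you propose, and dispatches ``both in $P$'' in one line via homology—any three of $[b_+],[a_-],[c_+],[c_-]$ form a basis of $H_1(\Split{V_\tau})$—rather than by working through the three pairings. The case you wave away as ``handled analogously'' (one component in $F$, one in $P$) is the one that actually carries the content: the homology constraint pins the $P$-component down as the curve separating $c_+\cup b_+$ from $c_-\cup a_-$, after which $A$ can be isotoped so that $A\cap B$ is a single disk $\partial$-compressing $S_B$. The engine in both halves of the paper's proof is thus not atoroidality-plus-casework but the reduction to $S_B$ inside $B$; you mention $S_B$ only as an intersection-minimizer, not as the destination of the argument.
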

\begin{proof}
If $S$ is inessential, then $S$ is $\partial$-compressible. Choose a $\partial$-compressing disk $D$ of $S$ 
that minimizes $\vert D\cap S_B\vert$. 
Since $F$ is an annulus meeting both $S_+,S_-$,
the disk $D$ is disjoint from $F$. Without loss of generality, it may be assumed that $D$ meets $S_+$, and hence $D\cap S_-=\emptyset$. Particularly, $D$ can be regarded as a disk in the knot exterior given by 
attaching two $2$-handles to $\Split{V_\alpha}$ along $a_-$ and $b_-$. 
Since $D$ is disjoint from $a_+$, either $D$ meets both $b_+, c_+$ or only one of them.

If it is the former, the disk $D$ meets $S_B$ because every arc connecting $b_+$ and $c_+$ in $P$ meets $B$.  
If it is the latter, then $D$ is separating. Therefore $D\cap P$ is a separating arc in $P$ either meeting $b_+$ and separating $c_+$ from $c_-\cup a_-$ or meeting $c_+$ and separating $b_+$ from $c_-\cup a_-$. Either case implies 
the disk $D$ meets $S_B$. 

By the minimality, for every disk $D'$ in $D\cap B$, the intersection $(P\cap B)\cap \partial D'$ is essential in $P\cap B$, so $D'$ is a $\partial$-compressing disk of $S_B$, contradicting $\Compl \alpha$ is non-trivial by Lemma \ref{lm:non_trivial_essen_irreducible}. This proves the first assertion. 

Suppose there is an annulus $A$ disjoint from $P$. Then observe first that $\partial A$ cannot be in $F$ by the atoroidality of $\Compl{V_\alpha}$, given Lemma \ref{lm:atoroidality}. On the other hand, 
because any three of the homology classes $[b_+],[a_-],[c_+],[c_-]$ form a basis of $H_1(\Split{V_\alpha})$, $\partial A$ cannot both be in $P$. Therefore one component of $\partial A$ is in $F$, and the other in $\partial P$. This forces the loop $P\cap \partial A$ to be the one separating $c_+\cup b_+$ from $c_-\cup a_-$. In particular, $A$ can be isotoped so 
that $A\cap B$ is a disk $D$. Since $(P\cap B)\cap \partial D$ is an essential arc in $P\cap B$, $D$ is a $\partial$-compressing disk of $S_B$, contradicting that $\Compl \alpha$ is non-trivial by Lemma \ref{lm:non_trivial_essen_irreducible}.
\end{proof}

Observe that $P\cup F\subset \partial \Split{V_\alpha}$ distinguishes itself from $S_+\cup S_-\subset \partial\Split{V_\alpha}$ as being the subsurface induced from $\partial V_\alpha$. 
In general, a \emph{$3$-manifold pair} $(X,K)$ 
is an oriented $3$-manifold $X$ with a compact, not necessarily connected, subsurface $K\subset \partial X$.
Two $3$-manifold pairs $(X_1,K_1),(X_2,K_2)$ 
are \emph{equivalent} if there is an orientation-preserving homeomorphism $f:X_1\rightarrow X_2$ with $f(K_1)=K_2$.

For instance, $(\Split{V_\alpha},P\cup F)$ is a $3$-manifold pair. Given by a $\tau$-tangle $\alpha$, there is an associated $3$-manifold pair $(\Compl{\alpha}, F_\tau)$ given by the exterior $\Compl{\alpha}$, together with the frontier $F_\alpha$ of $\rnbhd{\alpha}$ in $B$; see Fig.\ \ref{fig:tau_exterior}.

In view of Lemma \ref{lm:S_disjoint_annulus}, 
$S$ cuts every essential annulus in $\Compl{V_\alpha}$ into some disks, each of which meets $S_+\cup S_-$ at two arcs. This motivates the following notion introduced in \cite{OzaWan:26}. A \emph{rectangle} in a $3$-manifold pair $(X,K)$ is a proper disk $R\subset X$ with $\partial R$ meeting $K$ transversally at two arcs, and 
it is \emph{good} if $R\cap K\subset K$ are two essential arcs. Good rectangles in the $3$-manifold pair associated to a $\tau$-tangle $\alpha$ are classified in \cite{OzaWan:26}. More specifically, 
let $F_\alpha':=\partial \Compl \alpha- \mathring{F}_\alpha$, and denote by $x,y,z$ the components of $\partial F_\alpha=\partial F_\alpha'$. 
Then an $\{st,uv\}$-rectangle is a good rectangle $R$ in $(\Compl{\alpha},F_\alpha)$ with one arc in $\partial R\cap F_\tau'$ connecting the components $s,t$ and the other connecting $u,v$, 
where $s,t,u,v\in\{x,y,z\}$; see Figs.\ \ref{fig:xyxz}, \ref{fig:xxxy}.
\begin{figure}[h]
 \begin{subfigure}{.32\linewidth}
		\centering
		\begin{overpic}[scale=.1,percent]{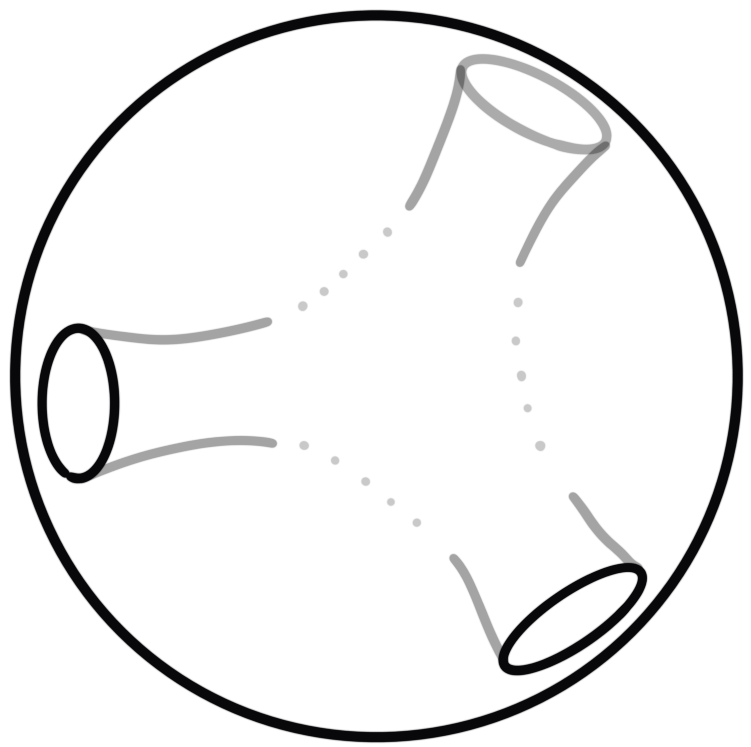}
			\put(20,44){\footnotesize \transparent{.7}{$F_\alpha'$}} 	
			\put(30,70){\footnotesize $F_\alpha$} 	
			\put(8,41){\footnotesize $x$} 	
			\put(70,83){\footnotesize \transparent{.6}{$y$}} 	
			\put(73,14){\footnotesize $z$} 	
		\end{overpic}
		\caption{The pair $(\Compl{\alpha},F_\alpha)$.}
		\label{fig:tau_exterior}
\end{subfigure}
	\begin{subfigure}{.32\linewidth}
		\centering
		\begin{overpic}[scale=.1,percent  ]{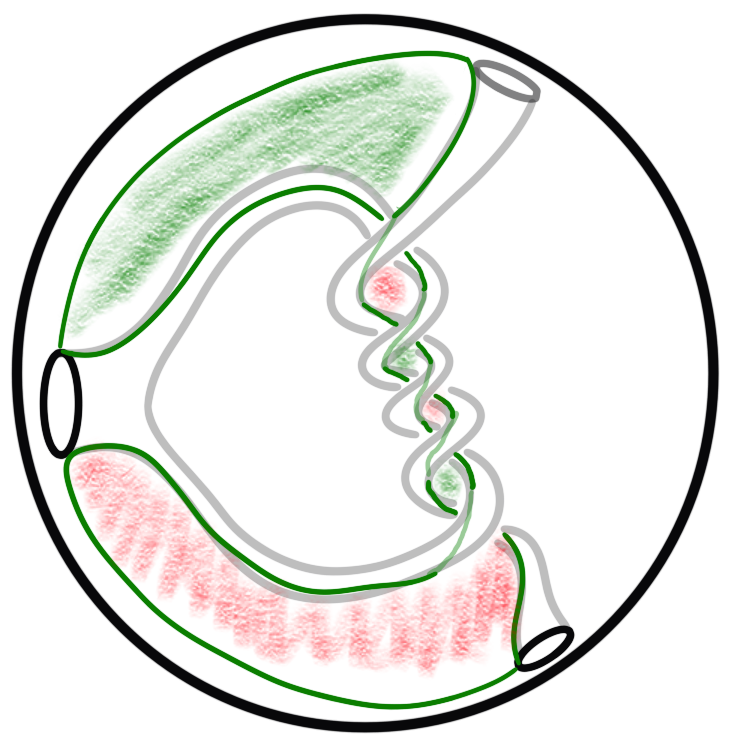}
			\put(-5,43){\footnotesize $x$} 	
			\put(70,94){\footnotesize $y$} 	
			\put(75,3){\footnotesize $z$} 	
		\end{overpic}
		\caption{$\{xy,xz\}$-rectangle.}
		\label{fig:xyxz}
	\end{subfigure}
	\begin{subfigure}{.32\linewidth}
		\centering
		\begin{overpic}[scale=.1,percent]{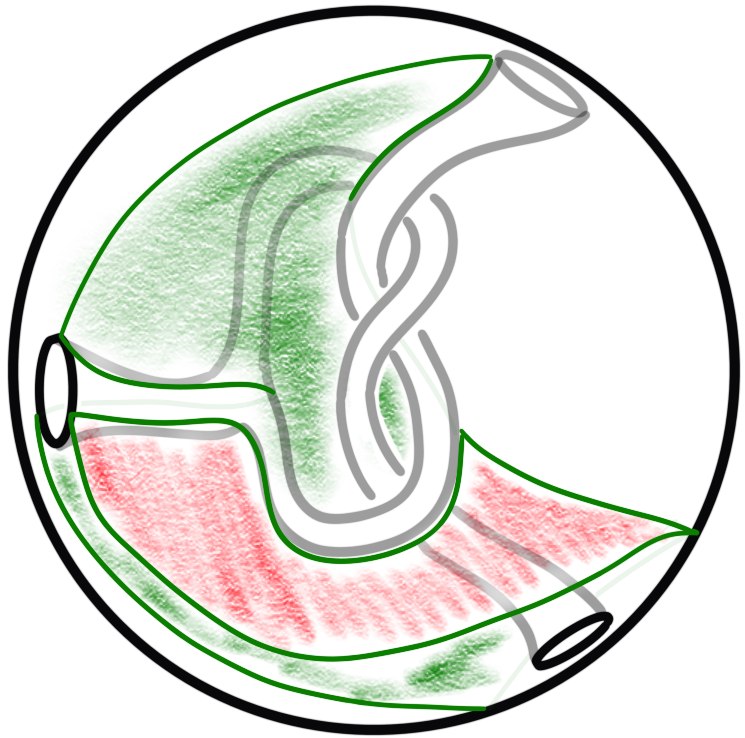}
			\put(-5,43){\footnotesize $x$} 	
			\put(72,94){\footnotesize $y$} 	
			\put(79,3){\footnotesize $z$} 			
			\end{overpic}
		\caption{$\{xx,xy\}$-rectangle.}
		\label{fig:xxxy}
	\end{subfigure}
	\caption{Good rectangles in the $3$-manifold pair $(\Compl{\alpha},F_\alpha)$.}
\end{figure} 
  
\begin{lemma}[\cite{OzaWan:26}]\label{lm:rectangles_tau_tangle} 
Given a $\tau$-tangle $\alpha$,
if the associated $3$-manifold pair admits a good rectangle $R$. Then $\alpha$ is rational. Without loss of generality, it may be assumed it is rational at $x$. Then, in addition,
\begin{enumerate}[label=(\roman*)]
\item $R$ is an $\{xy,xz\}$-, $\{xx,xy\}$- or $\{xx,xz\}$-rectangle; see Figs.\ \ref{fig:xyxz}, \ref{fig:xxxy}.
\item $R$ is the former if and only if $\alpha$ is $\pm\frac{1}{n}$-rational with $n\neq \pm 1$ and odd; $R$ is the latter two if and only if 
$\alpha$ is $\pm\frac{1}{3}$-rational. 
\end{enumerate}      
\end{lemma}

Consider the $3$-manifold pair $(\Split{V_\alpha},P\cup F)$, and  
let $s,t,u,v\in\{a,b,c\}$.
Then a $(st,uv)$-rectangle is 
a good rectangle $R$ with $R\cap S_+$ joining $s_+,t_+$ and $R\cap S_-$ joining $u_-,v_-$. A $\pm\{st,uv\}$-rectangle is a good rectangle
$R$ that meets $S_\pm$ at an arc joining $s_\pm,t_\pm$ and an arc 
joining $u_\pm,v_\pm$. 

In view of Lemma \ref{lm:S_disjoint_annulus}, up to isotopy, $S$ cuts every essential annuli and M\"obius band in $\Compl{V_\alpha}$ into some good rectangles. 
For instance, the M\"obius band $M_\ast\subset \Compl{V_\alpha}$ in Fig.\ \ref{fig:mobius} is cut by $S$ into a $(cc,cc)$-rectangle $Q$; see Fig.\ \ref{fig:split_manifold}. 
 
\begin{lemma}\label{lm:disjoint_rectangles}
Suppose $R$ is good rectangle disjoint from $Q$ in the $3$-manifold pair $(\Split{V_\alpha},P\cup F)$. Then $R$ is a $(bc,cc)$-, $(cc,ac)$-, $(bc,ac)$-, $(cc,cc)$-, $(ac,bc)$, $+\{bc,cc\}$-, or $-\{ac,cc\}$-
rectangle.
\end{lemma}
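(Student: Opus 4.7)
The plan is to enumerate the essential arcs in $P\cup F$ that are disjoint from $Q$, pair them into boundaries of good rectangles, and then rule out the combinatorially admissible pairings that do not bound a properly embedded disk.

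Since $Q$ is a $(cc,cc)$-rectangle, $Q\cap(P\cup F)$ is concentrated in $P$ and consists of two proper arcs from $c_+$ to $c_-$. Reading from Figure \ref{fig:split_manifold}, these two arcs cut $P$ into regions so that the punctures $a_-$ and $b_+$ lie on opposite sides. Consequently, up to isotopy the essential arcs of $P$ disjoint from $Q$ are of types $\{c_+,c_-\}$, $\{c_+,c_+\}$, $\{c_-,c_-\}$, $\{c_+,a_-\}$, $\{c_-,a_-\}$, $\{c_+,b_+\}$, and $\{c_-,b_+\}$; in particular, no essential $\{a_-,b_+\}$-arc survives. On the annulus $F$, the unique essential arc is $\{a_+,b_-\}$, automatically disjoint from $Q$. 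For any good rectangle $R$ disjoint from $Q$, the pair of arcs $R\cap(P\cup F)$ is drawn from this list, and the cyclic ordering of the sides of $R$ then dictates the endpoints of $R\cap(S_+\cup S_-)$ and hence the type $(st,uv)$ or $\pm\{st,uv\}$.

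I then proceed case-by-case over all admissible pairings. For each pairing yielding a type on the list (for instance, combining a $\{c_+,b_+\}$- with a $\{c_+,c_+\}$-arc in the $b_+$-region of $P$ to produce a $+\{bc,cc\}$-rectangle, or using the $F$-arc together with a $\{c_+,c_-\}$-arc in $P$ to produce an $(ac,bc)$-rectangle), I would exhibit an explicit embedded disk realizing the boundary. For every remaining pairing, I would show that the resulting boundary loop in $\partial\Split{V_\tau}$ does not bound a disk, using the homology relations $[a_\pm]+[b_\pm]=[c_\pm]$ together with the relation $[a_+]+[b_-]=0$ induced by the annulus $F$. These relations pin down $H_1(\Split{V_\tau})$ as a free abelian group of rank three (any three of $[b_+],[a_-],[c_+],[c_-]$ form a basis), so the boundary class of each candidate can be computed explicitly by tracking its traversal along the six distinguished circles.

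The main obstacle is this last ruling-out step. The arc enumeration is finite and routine once the regions of $P\setminus(Q\cap P)$ are identified, but candidates outside the list---such as hypothetical $(ab,ab)$- or $(ab,bc)$-rectangles---pass the combinatorial check and must be excluded by a homological computation showing that their boundary loops represent nontrivial classes in $H_1(\Split{V_\tau})$ and therefore cannot bound. After discarding these, the seven surviving types coincide with those in the statement.
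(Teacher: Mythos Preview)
There is a real gap in your combinatorial enumeration. You only record how $R\cap(P\cup F)$ sits relative to $Q\cap(P\cup F)$, but $Q$ also meets $S_+$ and $S_-$: the arc $Q\cap S_\pm$ is an essential arc from $c_\pm$ to $c_\pm$, and disjointness from $Q$ forces $R\cap S_\pm$ to miss that arc as well. In the pair of pants $S_+$ this already kills any arc joining $a_+$ to $b_+$ (such an arc must cross every arc separating $a_+$ from $b_+$), and likewise any essential arc from $a_+$ to $a_+$ or from $b_+$ to $b_+$. This is exactly the mechanism behind the paper's Case~1 reduction and behind the step ``$bb\notin\{st,uv\}$'' in Case~2. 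Your own examples illustrate the oversight: $(ab,ab)$ is in fact already excluded by your $P$-arc list (it would need an $\{a_-,b_+\}$-arc), while $(ab,bc)$ is excluded by the $S_+$-constraint you omitted; neither actually reaches the homological stage.

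Once the $S_\pm$-constraint is added, what survives beyond the seven listed types are only the one-sided patterns $+\{bc,bc\}$, $+\{cc,cc\}$ and their $S_-$-mirrors. Your plan is to kill these by showing $[\partial R]\neq 0$ in $H_1(\Split{V_\tau})$, and that does work for $+\{bc,bc\}$ (the paper's ``half-integral boundary slope'' objection is essentially the same computation). For $+\{cc,cc\}$, however, the paper does not compute a homology class at all: it argues that if such an $R$ were an embedded disk then $R$ would separate $\Split{V_\tau}$, and chasing connectivity through $F$ and $S_-$ forces $a_+,b_+,a_-,b_-,c_-$ all onto one side, so the two arcs $R\cap P$ are boundary-parallel in $P$---contradicting goodness. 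Your sketch would need either a direct computation showing $[\partial R]\neq 0$ for that boundary pattern or a reproduction of this separation argument; the bare homological outline you give does not cover it.
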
 
\begin{proof}
We divide it into two cases:
\subsection*{Case $1$: $R$ meets both $S_+$ and $S_-$.}
In other words, $R$ is a $(st,uv)$-rectangle. 
Observe first, by the connectedness of $R$, $a\in\{s,t\}$ if and only if $b\in\{u,v\}$. 

Suppose $s=a,u=b$. Then either $t\in\{b,c\},v\in\{a,c\}$ 
or $t=a,v=b$, and hence there are five possibilities.
The rectangle $R$ being disjoint from $Q$ and being good 
implies that $t\neq a$ or $b$ and $u\neq a$ or $b$, and hence only $(ac,bc)$-rectangle may occur.  

Suppose $a\not\in\{s,t\},b\not\in\{u,v\}$. 
Then there are $9$ possibilities, yet $R$ being disjoint from $Q$ rules out any configuration with $st=bb$ or $uv=aa$. Thus only $(cc,ac)$-, $(cc,cc)$-, $(bc,ac)$-, and $(bc,cc)$-rectangle may occur.

\subsection*{Case $2$: $R$ meets only one of $S_+,S_-$.} 
Suppose $R$ only meets $S_+$, and $R$ is a $+\{st,uv\}$-rectangle. 
Then by the connectedness of $R$, we have $a\not\in\{s,t,u,v\}$, and therefore, there are six possibilities. The rectangle $R$ being disjoint from $Q$
implies $bb\not\in\{st,uv\}$, and therefore only 
$+\{bc,bc\}$-, $+\{cc,cc\}$-, or $+\{bc,cc\}$-rectangle
may occur. The first implies there exists a disk with a half integral boundary slope in a knot exterior, an impossibility. The second implies $R$ is separating, and since $R\cap S_+$ are two parallel arcs whose boundary is in $c_+$. 
The loops $a_+,b_+$ are in the same component $X$ cut off by $R$ in $\Split {V_\alpha}$, and hence $a_-,b_+,c_-$ are all in $X$. In particular, $P\cap \partial R$ consists of two inessential arcs, contradicting $R$ being good. As a result, $R$ can only be a $+\{bc,cc\}$-rectangle.
Similarly, if $R$ meets only $S_-$, then $R$ is a $-\{ac,cc\}$-rectangle.   
\end{proof}
 
Observe that $\partial Q$ cuts $P$ into a pair of pants $P_\alpha$ and a disk, 
and $Q$ cuts $\Split{V_\alpha}$ into two components with one containing $P_\alpha$, denoted by $E_\alpha$. Furthermore, there is a canonical orientation-preserving homeomorphism $f_\alpha$ from the $3$-manifold pair $(E_\alpha,P_\alpha)$ to the $3$-manifold pair $(\Compl\alpha,F_\alpha)$ associated to the $\tau$-tangle $\alpha$ sending $b_+,a_-$ to 
$y,z$. In particular, every good rectangle in $(\Split{V_\alpha},P\cup F)$ 
disjoint from $F$ induces a good rectangle in 
the $3$-manifold pair $(\Compl{\alpha},F_\alpha)$ and vice versa. 

\begin{figure}[t]
	\begin{subfigure}{.32\linewidth}
		\centering
		\begin{overpic}[scale=.15,percent]{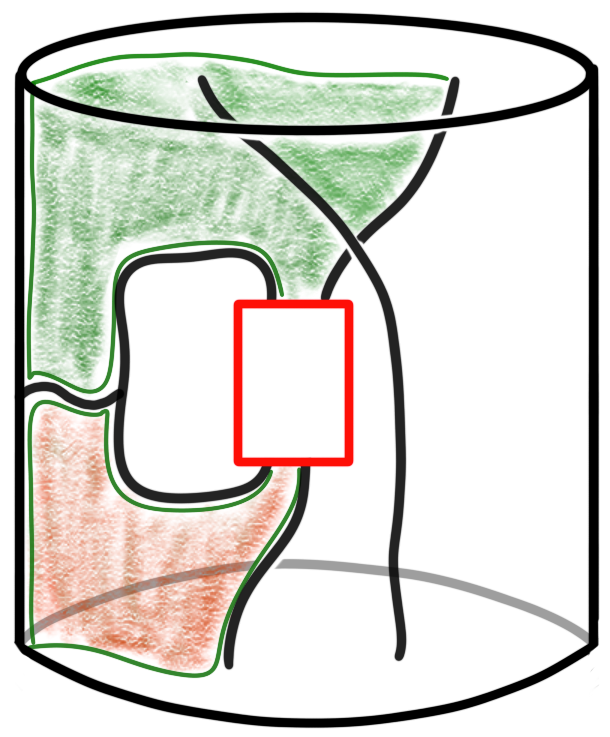}
			\put(38,45){$n$}
			\put(33,9){$a_-$}
			\put(63.8,87.5){\small $b_+$}
			\put(78,95){$c_+$}
			\put(56,9){$b_-$}
			\put(4.3,2.1){$c_-$}
			\put(17,88){\small $a_+$} 
		\end{overpic}
		\caption{$(bc,ac)$-rectangle.}
		\label{fig:parallelism_integral}
	\end{subfigure}
	\begin{subfigure}{.32\linewidth}
		\centering
		\begin{overpic}[scale=.15,percent]{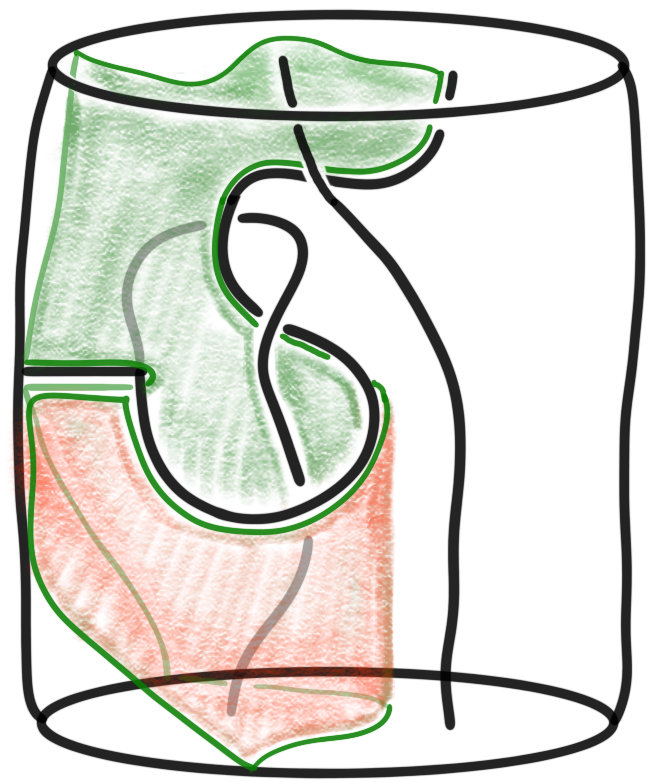}
			\put(38,89){\small $a_+$}
			\put(60,88.5){\small $b_+$}
			\put(79,94){$c_+$}
			\put(33.5,8.5){\transparent{.7}{$a_-$}}
			\put(1.1,1.5){$c_-$}
			\put(60,5){\small $b_-$}
		\end{overpic}
		\caption{$(bc,cc)$-rectangle.}
		\label{fig:parallelism_1}
	\end{subfigure} 
	\begin{subfigure}{.32\linewidth}
		\centering
		\begin{overpic}[scale=.15,percent]{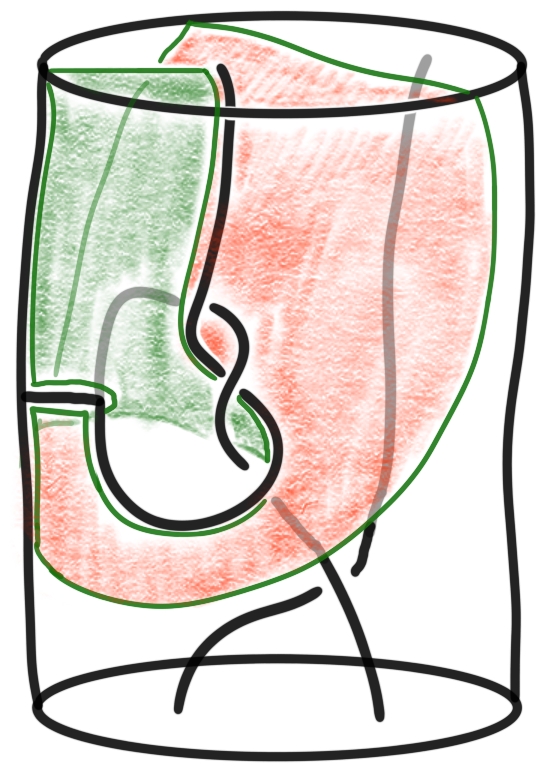}
			\put(26,6.7){\small $a_-$}
			\put(32.5,88){\small $b_+$}
			\put(57.4,90.1){\small $a_+$}
			\put(69,93){$c_+$}
			\put(52,6.1){\small $b_-$}
			\put(2,1.8){$c_-$}
		\end{overpic}
		\caption{+$\{bc,cc\}$-rectangle.}
		\label{fig:parallelism_2}
	\end{subfigure} 
	\caption{Good rectangles in $(\Split{V_\alpha},P\cup F)$.}
\end{figure}

\begin{lemma}\label{lm:rectangles_split_manifold}\hfill 
\begin{enumerate}[label=(\roman*)]
\item\label{itm:type_I} A $(bc,ac)$-rectangle exists if and only if $\alpha$ is $\pm \frac{1}{n}$-rational.
\item\label{itm:type_II} A $(bc,cc)$-, $+\{bc,cc\}$-, $(cc,ac)$-, or $-\{ac,cc\}$-rectangles exists if and only if $\alpha$ is $\pm \frac{1}{3}$-rational. 
\end{enumerate} 
\end{lemma}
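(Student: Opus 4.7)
The plan is to translate good rectangles of the listed types in $(\Split{V_\tau}, P\cup F)$ into good rectangles in the shaded exterior $(\Compl{\tau}, F_\tau)$ of the $\tau$-tangle via the canonical homeomorphism $(E_\tau, P_\tau)\cong(\Compl{\tau}, F_\tau)$, and then invoke Lemma \ref{lm:rectangles_tau_tangle}. A key preliminary observation is that every rectangle type appearing in the statement---$(bc,ac)$, $(bc,cc)$, $(cc,ac)$, $+\{bc,cc\}$, $-\{ac,cc\}$---has both of its arcs on $A=P\cup F$ lying in $P$, because their endpoints on $\partial S_+\cup\partial S_-$ all fall in $\{b_+,c_+,a_-,c_-\}\subset \partial P$ and never meet $\partial F=a_+\cup b_-$. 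Thus these rectangles never touch the annulus $F$, and the whole analysis can be carried out relative to $P$.

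Given such a good rectangle $R$, I would first minimize $|R\cap Q|$ by isotopy. The irreducibility of $\Split{V_\tau}$ (Lemma \ref{lm:non_trivial_essen_irreducible}) removes all circle intersections via innermost-disk surgery, while the essentiality of $R\cap A$ and $Q\cap A$ in $P$ allows outermost-arc $\partial$-compressions to remove all arc intersections; these surgeries can be arranged so as to preserve the combinatorial type of $R$. Since $R\cap A\subset P_\tau$, the resulting rectangle lies in the component $E_\tau$ of $\Split{V_\tau}\setminus Q$. Under the canonical homeomorphism $(E_\tau, P_\tau)\cong(\Compl{\tau}, F_\tau)$, the three boundary circles of $P_\tau$ correspond to the three punctures $x, y, z$ of $F_\tau$; tracing the connecting arc of $\Gamma_\tau$ through the identification shows that the boundary circle of $P_\tau$ arising from the two arcs $Q\cap P$ together with the remnants of $c_+, c_-$ corresponds to $x$, whereas $a_-$ and $b_+$ correspond to $y$ and $z$. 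Under this identification, a $(bc,ac)$-rectangle transports to a $\{xy,xz\}$-rectangle, i.e.\ a $\{su,sv\}$-rectangle in Lemma \ref{lm:rectangles_tau_tangle}'s notation with $s=x$; the $(bc,cc)$- and $+\{bc,cc\}$-rectangles transport to $\{xz,xx\}$-rectangles (of type $\{ss,su\}$); and the $(cc,ac)$- and $-\{ac,cc\}$-rectangles transport to $\{xy,xx\}$-rectangles (of type $\{ss,sv\}$). Lemma \ref{lm:rectangles_tau_tangle} then delivers the assertions \ref{itm:type_I} and \ref{itm:type_II}; the converse direction is obtained by transporting the good rectangle produced by Lemma \ref{lm:rectangles_tau_tangle} back into $E_\tau\subset\Split{V_\tau}$ via the same homeomorphism.

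The main obstacle is the intersection-minimization step: each $\partial$-compression used to remove an outermost arc of $R\cap Q$ must be verified to preserve the combinatorial type of $R$, namely, which boundary components of $\partial S_\pm$ are met by the arcs $R\cap S_\pm$. This requires a delicate case analysis constrained by Lemma \ref{lm:disjoint_rectangles}, which classifies all $Q$-disjoint good rectangles and thereby forces each surgery to terminate within the types listed in the statement. One must also check that the $F_\tau'$-arcs of the $\Compl{\tau}$-rectangle supplied by Lemma \ref{lm:rectangles_tau_tangle} can be isotoped so that, after transport, they avoid the interior of $Q$ and yield a proper disk in $\Split{V_\tau}$ of the claimed type.
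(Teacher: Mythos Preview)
Your proposal is correct and follows the same route as the paper: transport the listed rectangle types through the canonical identification $(E_\tau,P_\tau)\cong(\Compl\tau,F_\tau)$ (with the boundary circle built from $Q\cap P$ and remnants of $c_\pm$ corresponding to $x$) and then invoke Lemma~\ref{lm:rectangles_tau_tangle}. The paper is terser---it records this correspondence in the paragraph just before the lemma and simply cites it---and your worry about preserving the combinatorial type under $\partial$-compressions can be sidestepped by first isotoping $\partial R$ into $\partial E_\tau\setminus Q$ (each arc type in question has a representative there), so that $R\cap Q$ consists only of circles, which innermost-disk surgery removes without touching $\partial R$.
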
 
\begin{proof}
 
\ref{itm:type_I} follows from Lemma \ref{lm:rectangles_tau_tangle} and that, via $f_\alpha$, a $(xy,xz)$-rectangle in $(\Compl\alpha,F_\alpha)$ induces a 
$(bc,ac)$-rectangle in $(\Split{V_\alpha},P\cup F)$ and vice versa; see Fig.\ \ref{fig:parallelism_integral}.


Similarly, \ref{itm:type_II} follows from Lemma \ref{lm:rectangles_tau_tangle} and the fact that, via $f_\alpha$, every $(xx,xy)$-rectangle (resp.\ $(xx,xz)$-rectangle) in $(\Compl\alpha,F_\alpha)$ induces 
a $(bc,cc)$- or $+\{bc,cc\}$-rectangle (resp.\ $(cc,ac)$- or $-\{cc,ac\}$) in $(\Split{V_\alpha},P\cup F)$, depending on whether it meets $S_-$ (resp.\ $S_+$), and vice versa; see Figs.\ \ref{fig:parallelism_1}, \ref{fig:parallelism_2}.
\end{proof}

\cout{
\begin{theorem}[{Classification theorem}]\label{teo:annulus_classification} Up to isotopy, the exterior of $V_\tau$ admits
\begin{enumerate}[label=(\roman*)] 
\item  infinitely many essential annuli if $(B,\tau)$ is $\pm\frac{1}{3}$-rational.
\item  two essential annuli if $(B,\tau)$ is $\frac{1}{n}$-rational with $\vert n\vert >3$ odd.
\item   a unique essential annulus if $(B,\tau)$ is not $\frac{1}{n}$-rational.
\end{enumerate}
\end{theorem}
}

Given a non-trivial handlebody-knot $V$, let
$A\subset\Compl V$ be a separating annulus that cuts off from $\Compl V$ a solid torus $U$. Then the \emph{slope} of $A$ is defined to be the slope of the core of $A$ with respect to $U$. For instance, the frontier $A_\ast$ of a regular neighborhood of the M\"obius band $M_\ast$ in Fig.\ \ref{fig:mobius} is a separating annulus with a slope of $-\frac{1}{2}$.

\begin{lemma}\label{lm:annuli} Up to isotopy, the exterior of $V_\alpha$ admits, 
\begin{enumerate}[label=(\roman*)] 
\item\label{itm:infinite} infinitely many essential annuli if $\alpha$ is $\pm\frac{1}{3}$-rational,
\item\label{itm:two} two essential annuli $A_\ast,A_n$ if $\alpha$ is $\frac{1}{n}$-rational with $\vert n\vert>3$ odd, where $A_n$ is the frontier of $M_n$ in Fig.\ \ref{fig:Aprime} and has a slope of $\frac{n-4}{2}$, or
\item\label{itm:one} one essential annulus $A_\ast$ otherwise. 
\end{enumerate}
In addition, if $\alpha$ is $\frac{1}{3}$- or $-\frac{1}{3}$-rational, 
then $V_\alpha$ is, respectively, $\fivetwo$, $\sixthirteen$ in the handlebody-knot table \cite{IshKisMorSuz:12}. 
\end{lemma}

\begin{proof} 
Case \ref{itm:infinite}: By Figs.\ \ref{fig:V_threeone}, \ref{fig:V_neg_threeone},
if $\alpha$ is $\frac{1}{3}$-rational or $-\frac{1}{3}$-rational, then $V_\alpha$ is $\fivetwo$ and $\sixtwelve$ in \cite{IshKisMorSuz:12}. In either case, the JSJ-decomposition of the exterior $\Compl{V_\alpha}$ admits an admissible I-bundle over a once-punctured Klein bottle \cite{Wan:24i} (see also \cite{KodOzaWan:25i}). Hence the exterior of $V_\alpha$ admits infinitely many essential annuli, up to isotopy.
\begin{figure}[b]
	\begin{subfigure}{.35\linewidth}
		\centering
		\begin{overpic}[scale=.14,percent]{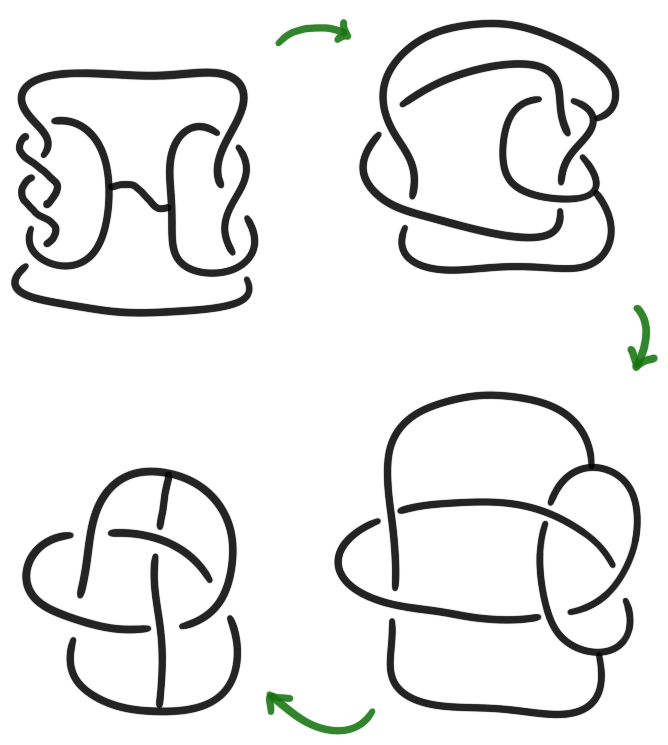}
			\put(0,2){$\fivetwo$}
		\end{overpic}
		\caption{$\frac{1}{3}$-rational $\alpha$; $\fivetwo$.}
		\label{fig:V_threeone}
	\end{subfigure}
	\begin{subfigure}{.38\linewidth}
		\centering
		\begin{overpic}[scale=.14,percent]{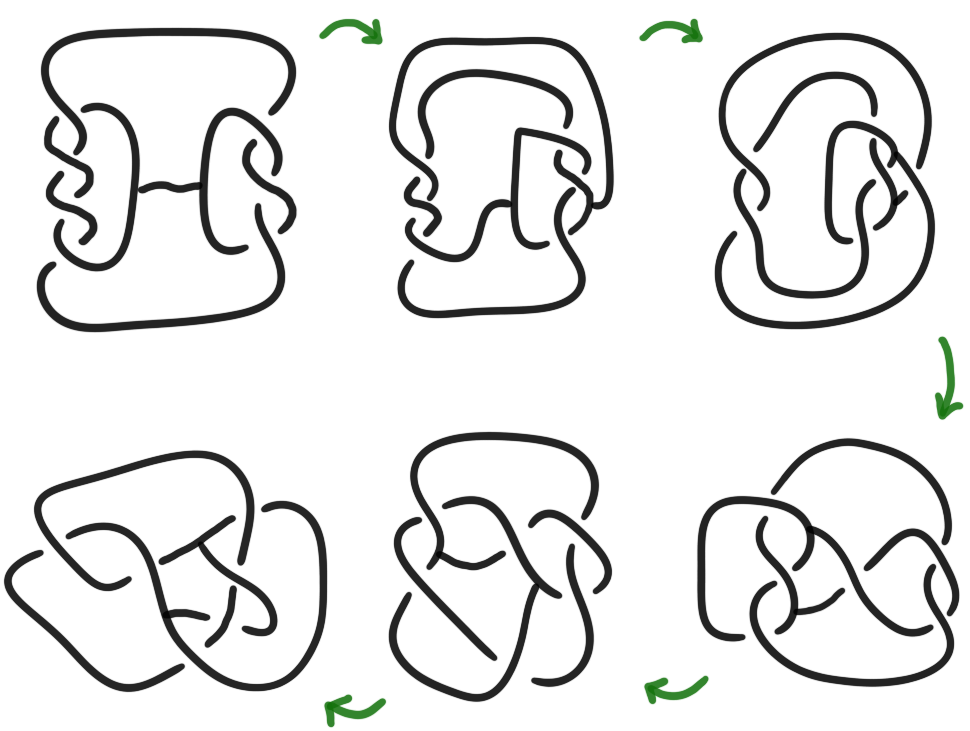}
			\put(0,1){$\sixthirteen$}
		\end{overpic}
		\caption{$-\frac{1}{3}$-rational $\alpha$, and $\sixthirteen$.}
		\label{fig:V_neg_threeone}
	\end{subfigure} 
	\begin{subfigure}{.25\linewidth}
	\centering
	\begin{overpic}[scale=.12,percent]{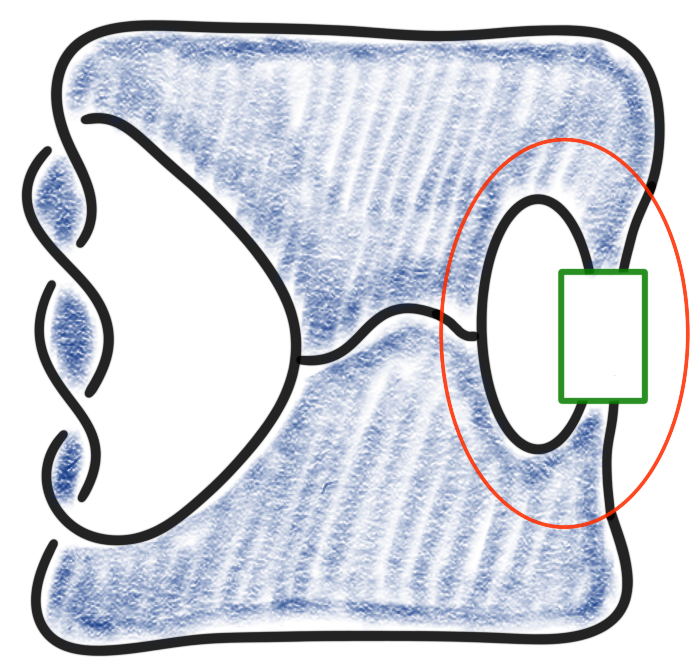}
		\put(83,43){$n$}
	\end{overpic}
	\caption{M\"obius band $M_n$.}
	\label{fig:Aprime}
\end{subfigure} 
	\caption{}
\end{figure}

Case \ref{itm:two}: By Lemma \ref{lm:rectangles_split_manifold}, $(bc,ac)$-rectangles and $(ac,bc)$-rectangles exist. The union of a $(bc,ac)$-rectangle and 
a $(ac,bc)$-rectangle induces a M\"obius band $M_n$ non-isotopic and disjoint from $M_0$; see Fig.\ \ref{fig:Aprime}. Denote by $A_n$ the frontier of a regular neighborhood of $M_n$. 
Since $M_n$ is a M\"obius band with $n-4$ half twists, the slope of $A_n$ is $\frac{n-4}{2}$.
Consider the following claim. 

\textbf{Claim: If $A_\ast$, $A_n$ are not the only essential annuli in $\Compl{V_\alpha}$, up to isotopy, then there is an essential annulus $A$ disjoint from and not isotopic to $A_\ast,A_n$.}
 
Suppose $A_\ast,A_n$ are characteristic annuli of $\Compl{V_\alpha}$. Then every annulus can be isotoped away from $A_\ast\cup A_n$, and thus the claim.
If at least one of $A_\ast,A_n$ is not 
characteristic, then by \cite{Wan:24ii} (see also \cite{KodOzaWan:25i}), 
one and exactly one of the following happens: 
the JSJ-decomposition of $\Compl{V_\alpha}$ admits 
\begin{itemize}
\item\label{itm:typeS} an admissible Seifert fibered solid torus that meets $\partial V_\tau$ at two components, 
\item\label{itm:typeM} an admissible I-bundle over a once-punctured M\"obius band, or 
\item\label{itm:typeK} an admissible I-bundle over a once-punctured Klein bottle.
\end{itemize}
In the first two cases, $\Compl{V_\alpha}$ admits at least two characteristic annuli and hence, there always exists an essential annulus not isotopic to $A_\ast$ and $A_n$ and disjoint from them. In the third case, the unique characteristic annulus $A_c$ cannot be the frontier of a regular neighborhood of some M\"obius band since its slope is never a half integer by \cite{KodOzaWan:25i}. Take $A=A_c$. Then the claim follows. 

To prove \ref{itm:two}, we suppose otherwise 
and $A$ is an essential annuli disjoint from and not isotopic to $A_\ast$ or $A_n$. 
By Lemmas \ref{lm:S_disjoint_annulus} and \ref{lm:rectangles_split_manifold}, $S$ cuts 
$A$ into some good rectangles, which are disjoint from $Q$ and hence comprise some $(ac,bc)$-, $(bc,ac)$-, or $(cc,cc)$-rectangles by Lemma \ref{lm:disjoint_rectangles}. This implies $A$ is isotopic either to $A_\ast$ or to $A_n$, a contradiction.

Case \ref{itm:one}:
Suppose otherwise. Then by the JSJ-decomposition of $\Compl{V_\alpha}$, there exists an essential annulus $A$ disjoint from and not isotopic to $A_\ast$, and by Lemma \ref{lm:S_disjoint_annulus}, $S$ cuts $A$ into some good rectangles in $(\Split{V_\alpha},P\cup F)$, which are disjoint from $Q$. Since $\alpha$ is not $\frac{1}{n}$-rational, 
%
none of the five rectangles: $(bc,ac)$-, $(bc,cc)$-, $+\{bc,cc\}$-, $(cc,ac)$-, or $-\{ac,cc\}$-rectangle can exist.
On the other hand, if an $(ac,bc)$-rectangle exists, then 
one of the above five rectangles must occur. By Lemma \ref{lm:disjoint_rectangles}, $A$ is cut into some $(cc,cc)$-rectangles and hence comprises two $(cc,cc)$-rectangles, contradicting $A$ not isotopic to $A_\ast$.  
\end{proof}

\begin{remark}
Lemma \ref{lm:annuli} is a special case of \cite[Theorem $6.2$]{OzaWan:26}, yet we cannot use the result directly from \cite{OzaWan:26} as the different techniques developed here is needed in Sections \ref{subsec:M_surface}, \ref{subsec:proof}, \ref{subsec:extension}.  
\end{remark}

\subsection{$M$-surface}\label{subsec:M_surface}
To classify $V_\alpha$, we also need to study essential surfaces with negative Euler characteristic. 
By Lemma \ref{lm:annuli}, if $\alpha$ is not $\pm\frac{1}{3}$-rational, then $A_\ast$, the frontier of a regular neighborhood of $M_\ast$, is the unique essential annulus with a slope of $-\frac{1}{2}$ in $\Compl{V_\alpha}$; see Fig.\ \ref{fig:mobius}.
By \cite[Lemma $2.1$]{FunKod:20}, there exists, up to isotopy, a unique non-separating disk $D_\ast\subset V_\alpha$ disjoint from $\partial A_\ast$. This allows the following definition.

\begin{definition}\label{def:M_surface} 
Given a $\tau$-tangle $\alpha$ which is $\pm\frac{1}{3}$-rational, an $D_\ast$-surface of $V_\alpha$ is an essential pair of pants $G$ whose two boundary components are isotopic to $\partial D_\ast$ with the other boundary component not meridional.	
\end{definition} 
The exterior $\Compl{V_\alpha}$ contains at least one $D_\ast$-surface, namely, $S$ in Fig.\ \ref{fig:fourone_tau}, where the components $a,b\subset \partial S$ are isotopic to $\partial D_\ast$. Let $D_a,D_b\subset V_\alpha$ 
be the disks bounded by $a,b$, and $\hat{S}$
the union $S\cup D_a\cup D_b$.
 
 \begin{lemma}\label{lm:unique_M_surface}
 Suppose $\alpha$ is not $\pm\frac{1}{3}$-rational.	
 Then $S$ in Fig.\ \ref{fig:fourone_tau} is the unique $D_\ast$-surface, up to isotopy.
 \end{lemma}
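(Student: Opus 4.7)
The plan is to take any $M$-surface $G$ in $\Compl{V_\tau}$ and show it is isotopic to $S$ via a minimal intersection argument. I would first isotope $G$ so that $|G\cap S|$ is minimal among $M$-surfaces in its isotopy class. Since both $G$ and $S$ are essential and $V_\tau$ is irreducible by Lemma \ref{lm:non_trivial_essen_irreducible}, standard innermost-disk and outermost-arc arguments will show that $G\cap S$ consists of arcs essential in both surfaces. Using the uniqueness of the M\"obius band $M$ provided by Theorem \ref{intro:teo:classification}\ref{itm:unique}, $G$ can be isotoped off $M$, so cutting $G$ along these arcs yields good rectangles in $(\Split{V_\tau},P\cup F)$ disjoint from $Q$.

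I would then handle the case $G\cap S\neq\emptyset$ using the rectangle classifications. By Lemma \ref{lm:disjoint_rectangles}, each piece of $G$ is one of the seven listed rectangle types, and by Lemma \ref{lm:rectangles_split_manifold} the non-$\frac{1}{n}$-rationality hypothesis rules out all but the $(ac,bc)$- and $(cc,cc)$-types. Inspecting $\partial V_\tau$-arc endpoints, an $(ac,bc)$-rectangle contributes an $F$-arc ending on $a_+$, which upon regluing $S$ would require a continuing $P$-arc ending on $a_-$; but neither admissible rectangle type supplies such an arc, so $(ac,bc)$-rectangles cannot appear. Every piece must therefore be $(cc,cc)$, and every arc of $\partial G$ would then lie in $P$ with endpoints on $c_\pm$. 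But the two meridian components of $\partial G$, being isotopic to the separating curve $\partial D_M$ on $\partial V_\tau$, are disjoint from $c$ up to isotopy; an outermost-bigon reduction via the $\partial$-incompressibility of $G$ would then reduce $|G\cap S|$, contradicting minimality.

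In the remaining case $G\cap S=\emptyset$, the surface $G$ sits in $\Split{V_\tau}$ with $\partial G\subset P\cup F$, disjoint from $S_+\cup S_-$. Each meridian component of $\partial G$ must be an essential curve either in $F$ (parallel to its core) or in $P$ (parallel to $a_-$ or $b_+$). A case analysis of these placements, ruling out those incompatible with $G$ being an essential $3$-punctured sphere carrying exactly two meridian boundaries, should force $\partial G$ to be isotopic in $P\cup F$ to $\partial S_+$ (or $\partial S_-$). Irreducibility of $\Compl{V_\tau}$ together with the incompressibility of $S$ then produces a product region $S\times I$ between $G$ and $S_+$ in $\Split{V_\tau}$, so $G$ is isotopic to $S$ in $\Compl{V_\tau}$, as desired. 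The main obstacle is expected to be this last case analysis, where the two meridian boundaries and the third boundary of $G$ must be tracked carefully on $P\cup F$ and every configuration other than the standard one excluded by combining essentiality with the specific topology of $\Split{V_\tau}$.
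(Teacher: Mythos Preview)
There is a genuine gap in your handling of the case $G\cap S\neq\emptyset$. You assert that cutting $G$ along the arcs of $G\cap S$ ``yields good rectangles in $(\Split{V_\tau},P\cup F)$,'' and then invoke Lemmas~\ref{lm:disjoint_rectangles} and~\ref{lm:rectangles_split_manifold}. But those lemmas are about rectangles arising from \emph{annuli} or \emph{M\"obius bands}; a $3$-punctured sphere does not cut into rectangles in general. Concretely: the two meridian components $a',b'$ of $\partial G$ are isotopic to $\partial D_M$, hence to $a$ and $b$; after a minimal-intersection isotopy they become disjoint from $a\cup b\cup c=\partial S$. Consequently $a'$ and $b'$ sit as whole circles in $P\cup F$, and the pieces of $G$ in $\Split{V_\tau}$ containing them are not disks at all, let alone good rectangles. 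Your own final paragraph for this case in effect rediscovers this fact (``the two meridian components of $\partial G$\dots are disjoint from $c$ up to isotopy''), but the contradiction you derive there is with your unsupported rectangle claim, not with the hypothesis $G\cap S\neq\emptyset$; so the case is not actually closed. (Incidentally, $\partial D_M$ is non-separating on $\partial V_\tau$, not separating.)

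The paper proceeds quite differently. It does not feed $S'$ into the rectangle classification. Instead it first shows by a direct atoroidality argument that $S\cap S'$ has no circles, then uses the fact that \emph{both} $S$ and $S'$ are $M$-surfaces to conclude that every arc of $S\cap S'$ has both endpoints on $c$ (in $S$) and on $c'$ (in $S'$); only the absence of $\pm\{cc,cc\}$-rectangles (a single case of Lemma~\ref{lm:disjoint_rectangles}) is then needed to force at most one such arc, and a short analysis on $P$ disposes of the one-arc case. For the disjoint case, the paper does not attempt a boundary-matching/product-region argument (which would require more than irreducibility and incompressibility); it caps $a,b$ and $a',b'$ by meridian disks to obtain parallel disks $D,D'$ in the exterior of the solid torus $\overline{V_\tau-\rnbhd{D_M}}$, identifies the resulting $3$-ball as a two-string tangle which $Q$ separates, and concludes parallelism from atoroidality. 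Your sketch in this last case is not clearly wrong, but it is vague precisely where the paper's argument is doing real work.
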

 \begin{proof}
 Suppose $S'$ is another $D_\ast$-surface, and denote by $a',b',c'$ the three components of $\partial S'$ with $c'$ the one not isotopic to $\partial D_\ast$.
 Isotope $S'$ so $\vert S\cap S'\vert$ 
 is minimized. By the essentiality, components of $S\cap S'$ are essential arcs or circles in 
 $S,S'$.
 	
\subsection*{Case $1$: $S\cap S'$ contains a circle}
Let $\gamma\subset S\cap S'$ be an outermost circle in $S'$, and $A',A$ be the annuli cut off by $\gamma$ from $S',S$, respectively. 
Note the union $A\cup A'$ is an incompressible annulus in $\Compl{V_\alpha}$. 
Since $A\cup A'$ can be isotoped away from $S$, by Lemma \ref{lm:S_disjoint_annulus}, $A\cup A'$ is $\partial$-compressible. Hence by the irreducibility of $V_\alpha$ (see Lemma \ref{lm:non_trivial_essen_irreducible}), 
$A\cup A'$ is $\partial$-parallel. Particularly,
$A\cup A'$ cuts off from $\Compl{V_\alpha}$ a solid torus $W$ where the cores of $A,A'$ are longitudes; see Fig.\ \ref{fig:no_circle_c_cprime} for the case where $\partial(A\cup A')=c\cup c'$. Thus one can isotope $S'$ 
through $W$ and remove the intersection $\gamma$, a contradiction.

\subsection*{Case $2$: $S\cap S'$ contains no arcs.}
Since $S,S'$ are both $M$-surfaces, by the minimality, no arc in $S\cap S'$ meets the components of $\partial S,\partial S'$ parallel to $\partial D_\ast$. 
Therefore all arcs $S\cap S'$ in $S$ (resp.\ in $S'$) are separating and parallel. 
Denote by $A_a,A_b\subset S'$ the annular components cut off by $S\cap S'$ with 
$a'\subset \partial A_a,b'\subset \partial A_b$. 
Let $c_i$ be the intersection $c'\cap A_i$, $i=a,b$; see Fig.\ \ref{fig:intersection_M_surfaces}. It may be assumed that $A_a$ meets $S_+$ and $A_b$ meets $S_-$. Since $a',b'$ each bound a disk in $V_\alpha$, $A_a,A_b$ are separating in $\Split{V_\alpha}$. In addition, because $a',b'$ are parallel to $\partial D_\ast$, $c_a$ cuts off from $P$ either a disk or a $3$-punctured sphere. The former contradicts minimality, while the latter implies that $c_b$ cuts off a disk from $P$, contradicting the minimality; see Figs.\ \ref{fig:c_one_inessential}, \ref{fig:c_two_inessential}. 
 \begin{figure}[t]
 \begin{subfigure}{.32\linewidth}
 			\centering
 			\begin{overpic}[scale=.13,percent]{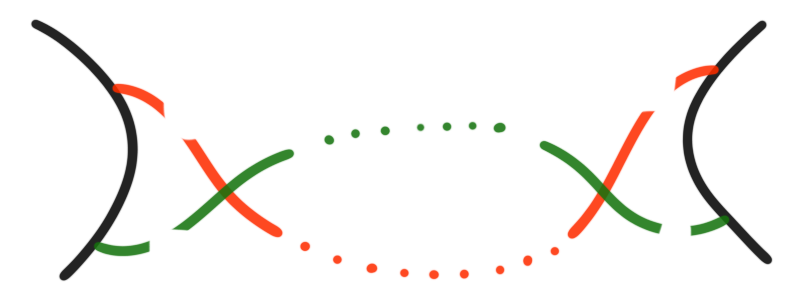}
 				\put(20,21){\footnotesize $A'$}
 				\put(77,24){\footnotesize $A'$}
 				\put(18,4){\footnotesize $A$}
 				\put(81,4){\footnotesize $A$}
 				\put(6,6){\footnotesize $c$}
 				\put(7,21){\footnotesize $c'$} 
 				\put(0,13){\footnotesize $V_\tau$} 
 				\put(93,15){\footnotesize $V_\tau$} 
 			\end{overpic}
 			\caption{$\partial (A\cup A')=c\cup c'$.}
 			\label{fig:no_circle_c_cprime}
 \end{subfigure}
 \begin{subfigure}{.32\linewidth}
 			\centering
 			\begin{overpic}[scale=.13,percent]{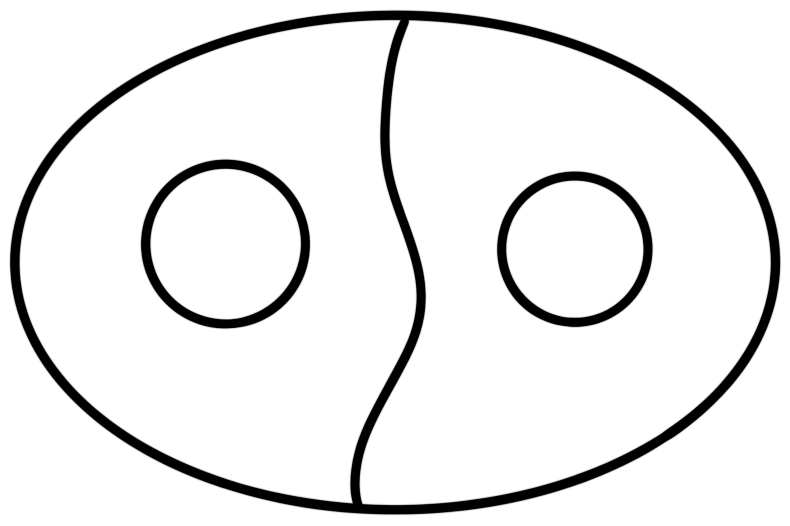}
 				\put(37,43){$a'$}
 				\put(80,43){$b'$}
 				\put(7,27){$A_a$}
 				\put(70,15){$A_b$}
 				\put(88,55){$c_b$}
 				\put(10,59){$c_a$}
 			\end{overpic}
 			\caption{Intersection $S'\cap S\subset S'$.}
 			\label{fig:intersection_M_surfaces}
 \end{subfigure}
 \begin{subfigure}{.32\linewidth}
 			\centering
 			\begin{overpic}[scale=.11,percent]{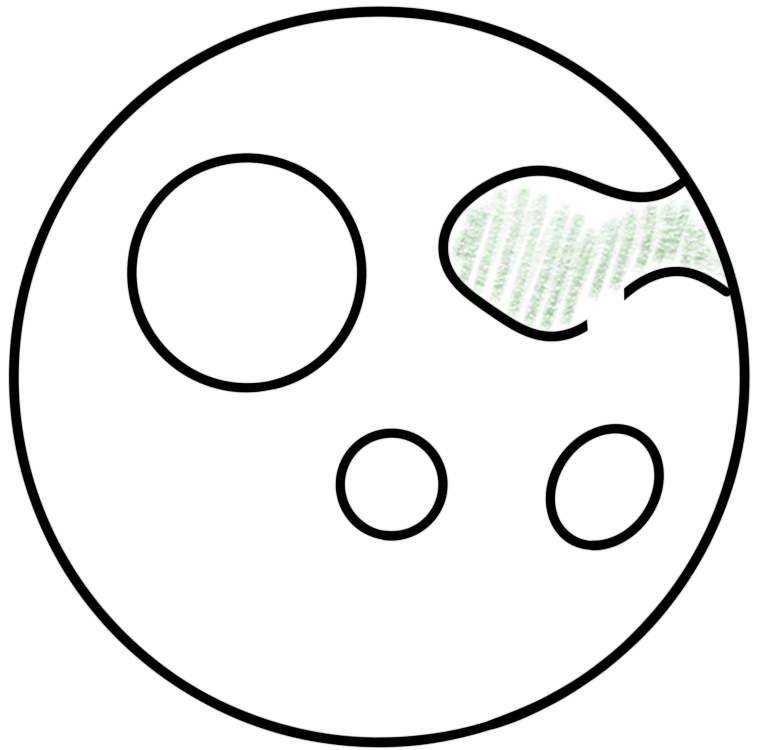}
 				\put(77.5,53.5){$c_a$}
 				\put(80,89){$c_+$}
 				\put(40,77.5){$c_-$}
 				\put(50,21){$a_-$}
 				\put(70,20){$b_+$}	
 			\end{overpic}
 			\caption{Inessential $c_a\subset P$.}
 			\label{fig:c_one_inessential}
 \end{subfigure}  	
 \begin{subfigure}{.32\linewidth}
 			\centering
 			\begin{overpic}[scale=.11,percent]{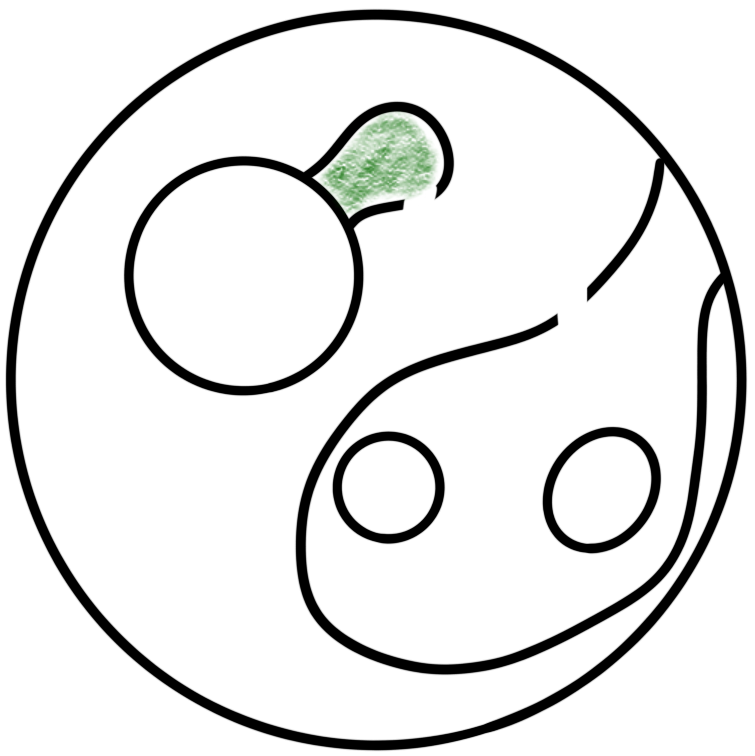}		
 				\put(74,54){$c_a$}
 				\put(53.5,68){$c_b$}
 				\put(80,89){$c_+$}
 				\put(29,80){$c_-$}
 				\put(48,22){$a_-$}
 				\put(69,22){$b_+$}
 			\end{overpic}
 			\caption{Essential $c_a\subset P$.}
 			\label{fig:c_two_inessential}
 \end{subfigure}  	
 \begin{subfigure}{.45\linewidth}
 			\centering
 			\begin{overpic}[scale=.11,percent]{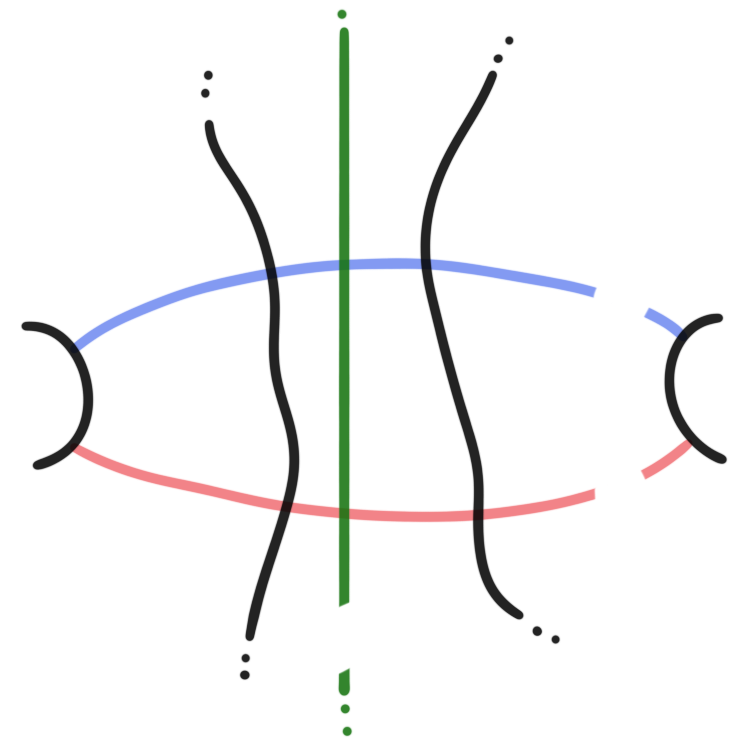}
 				\put(-2,43){\footnotesize $V_\alpha$}
 				\put(94,43){\footnotesize $V_\alpha$}
 				\put(80,59){\footnotesize $\hat S'$}
 				\put(80,29){\footnotesize $\hat S$}
 				\put(43,12.5){\footnotesize $Q$}
 				\put(14,85){\footnotesize $\Gamma_\alpha$}
 				\put(65,8){\footnotesize $\Gamma_\alpha$}
 				\put(70,45){\footnotesize $B$}
 			\end{overpic}
 			\caption{Disks $D,D'$; tangle $(B,T)$.}
 			\label{fig:S_Sprime_paralell}
 \end{subfigure}
 \caption{}
 \end{figure}    
 	
 	As a result, $S',S$ are disjoint.
 	Let $\hat S'$ be the union of $S'$ and the disks bounded by $a',b'$. It may be assumed that $\hat S, \hat S'$ are also disjoint, and hence $\hat S, \hat S'$ are parallel disks in the exterior of $U=V_\alpha-\openrnbhd{D_\ast}$. Particularly, $\hat S\cup \hat S'$ separates $\Compl U$.
 	This implies $S\cup S'$ also separates $\Compl{V_\alpha}$, and $c\cup c'$ 
 	cuts off an annulus $A$ from $\partial V_\alpha$. The union $\hat S\cup \hat S'\cup A$ bounds a $3$-ball $B$ (see Fig.\ \ref{fig:S_Sprime_paralell}) such that $B$ meets 
 	the spine $\Gamma_\alpha$ at a two-string tangle $T$, Up to isotopy, the $(cc,cc)$-rectangle $Q$ meets $B$ at a disk separating the two strings in $T$. By the atoroidality, each string is trivial. This implies $S,S'$ are parallel.
 \end{proof}
  
\begin{lemma}\label{lm:equivalence}
Suppose $\alpha,\beta$ are not $\pm\frac{1}{3}$-or $\frac{1}{5}$-rational. Then 
\begin{enumerate}[label=(\roman*)]
\item\label{itm:orientation_preserving} Every homeomorphism $f:(\sphere,V_\alpha)\rightarrow (\sphere,V_\beta)$ is orientation-preserving.
\item\label{itm:restriction} Every equivalence between $V_\alpha,V_\beta$ induces an $x$-equivalence between $\alpha,\beta$. 
\end{enumerate} 
\end{lemma}
\begin{proof}
By Lemma \ref{lm:annuli}, the frontier $A_\ast$ of $M_\ast$ is the unique essential separating annulus with a slope of $\pm\frac{1}{2}$ in the exterior $\Compl{V_\alpha}$ (resp.\ $\Compl{V_\beta}$). Therefore it may be assumed that $f(A_\ast)= A_\ast, f(M_\ast)=M_\ast$. Since both annuli are of slope $\frac{1}{2}$, $f$ is orientation-preserving.
This proves \ref{itm:orientation_preserving}.

By Lemma \ref{lm:unique_M_surface}, 
we may assume that $f(S)=S$. 
Recall that $S\cup M_\ast$ cuts off from $\Compl{V_\alpha}$ a $3$-manifold pair 
$(E_\alpha,P_\alpha)$ 
(resp.\ $(E_\beta,P_\beta)$),
and therefore $f$ induces an equivalence
$f':(E_\alpha,P_\alpha)\rightarrow (E_\beta,P_\beta)$ preserving $b_+\cup a_-$. 
Recall also there is an equivalence $f_\tau:(E_\tau,P_\tau)\rightarrow (\Compl{\tau},F_\tau)$ sending $b_+\cup a_-$ to $y\cup z$, $\tau=\alpha,\beta$. 
Extending the composition 
\[g:=g_\beta\circ f''\circ g^{-1}_\alpha: (E(\alpha),F_\alpha)\rightarrow (E(\beta),F_\beta).\]
to an $x$-equivalence between $\alpha$ and $\beta$ gives us \ref{itm:restriction}. 
\end{proof}

\subsection{Proof of Main Theorems}\label{subsec:proof}
 \begin{figure}[t]
 	\includegraphics[scale=.08]{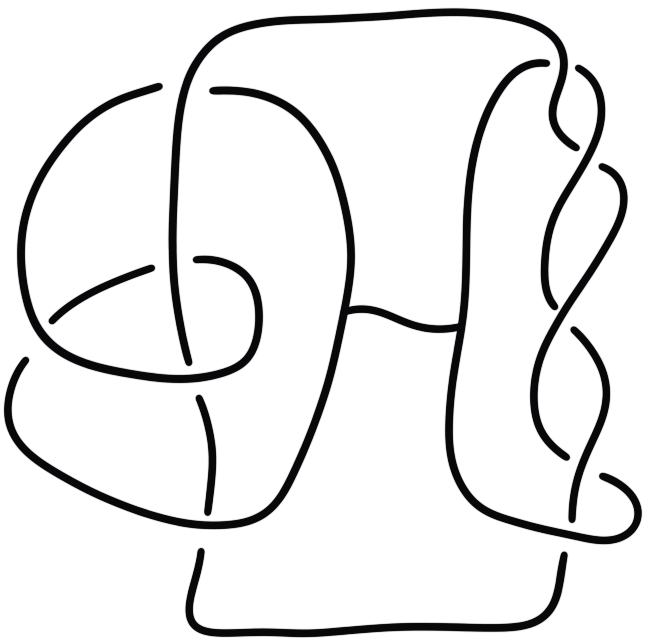}
 	\
	\includegraphics[scale=.08]{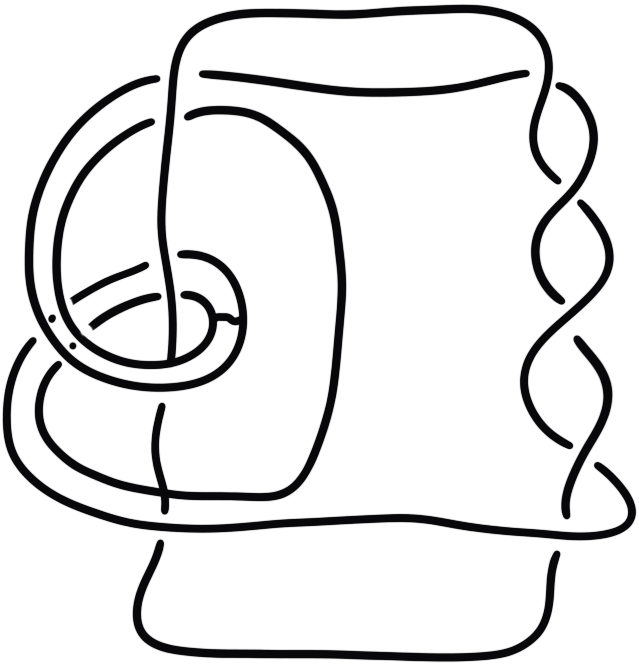}
	\
	\includegraphics[scale=.08]{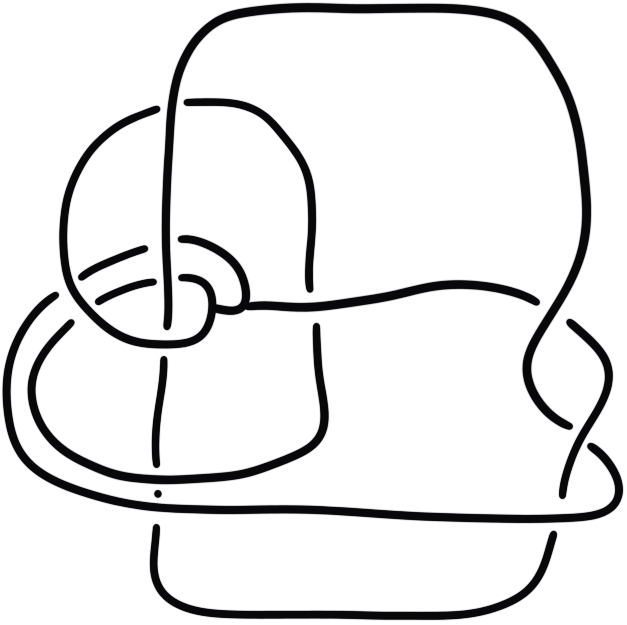}
	\
	\includegraphics[scale=.08]{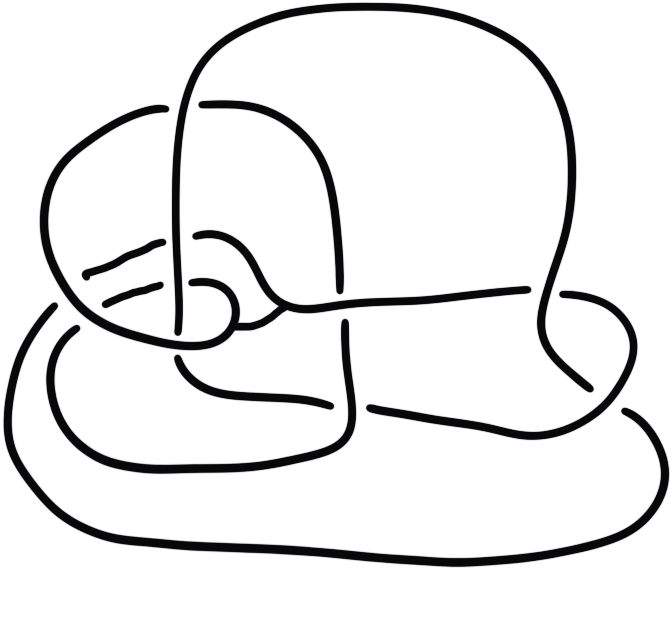}
	\
	\includegraphics[scale=.08]{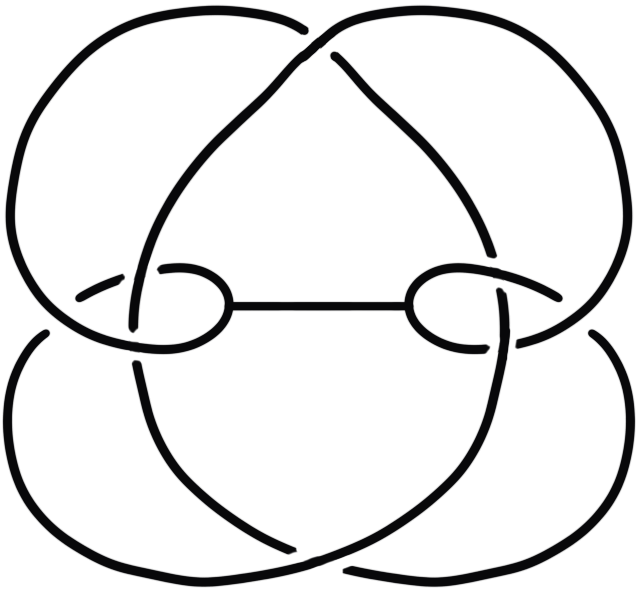}
	\
	\begin{overpic}[scale=.08,pe7cent]{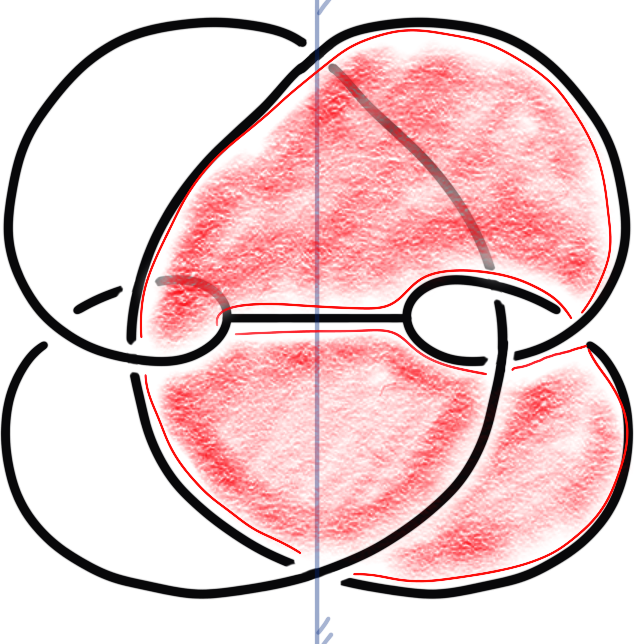}
	\put(47,62){\footnotesize $M_5$}
 	\put(30,-7){\tiny mirror}
 	\end{overpic}
	\caption{Deform $V_{\frac{1}{5}}$ into a symmetric form.}
	\label{fig:deformation_symmetric_form}
\end{figure}

If $\alpha$ is $\frac{1}{3}$-, $-\frac{1}{3}$-, or $\frac{1}{5}$-rational, then $V_\alpha$ is $\fivetwo$, $\sixthirteen$, or $\seventhirtytwo$ in the handlebody-knot table \cite{IshKisMorSuz:12,BelPaoPaoWan:25p}, respectively.  
\begin{lemma}[{\cite{LeeLee:12, Wan:23, KodOzaWan:25ii}}]\label{lm:five_six}
The handlebody-knots $\fivetwo,\sixthirteen$ are chiral, and	
\[\psym{\fivetwo}\simeq \mathbb{Z}_2\times \mathbb{Z}_2,\quad  \psym{\sixthirteen}\simeq \mathbb{Z}_2.\]
\end{lemma}
\begin{proof}
Their chirality is first determined in \cite{LeeLee:12}. The symmetry group of $\fivetwo$ are computed in \cite{Wan:23} and $\sixthirteen$ in \cite{KodOzaWan:25ii}.
\end{proof}
 
\begin{lemma}\label{lm:chirality}
The handlebody-knot $V_\alpha$ is amphichiral if and only if $\alpha$ is $\frac{1}{5}$-rational.
\end{lemma}
\begin{proof}
The ``only if'' direction follows from Lemma \ref{lm:equivalence}\ref{itm:orientation_preserving} and Lemma \ref{lm:five_six}. For the ``if'' direction, we deform $V_{\frac{1}{5}}$ into a symmetric form through the deformation in Fig.\ \ref{fig:deformation_symmetric_form}.
Then observe that the composition of a half twist of the M\"obius band $M_5$ in the last figure of Fig.\ \ref{fig:deformation_symmetric_form} and a mirror image about the $y$-axis preserves the diagram.
\end{proof}

\begin{proof}[Proof of Theorem \ref{teo:classification}]
Suppose $\alpha,\beta$ are two $x$-inequivalent $\tau$-tangles.	
If one of them is $\frac{1}{3}$-, $-\frac{1}{3}$-, or $\frac{1}{5}$-rational, then by Lemma \ref{lm:annuli}, $V_\alpha$ is not equivalent to $V_\beta$ or $\overline{V_\beta}$.  
If both $\alpha,\beta$ are not $\pm\frac{1}{3}$-rational or $\frac{1}{5}$-rational.
Then Lemma \ref{lm:equivalence} implies $V_\alpha$ is not equivalent to $V_\beta$ or $\overline{V_\beta}$. 
Combining with Lemma \ref{lm:chirality} completes the proof. 
\end{proof}

\begin{lemma}[{\cite[Theorem $2$]{McC:06}}]\label{lm:dehn_twists}
Let $M$ be a compact orientable $3$-manifold, and $h\in \paut(M)$. If the restriction of $h$ on $\partial M$ is Dehn twists along some disjoint loops $c_1,\dots, c_n\subset \partial M$, then there is a collection of disjoint annuli and disks in $M$ each of whose boundary component is isotopic to some $c_i$, and $h$ is isotopic to some composition of Dehn twists about these annuli and disks.
\end{lemma} 

\begin{lemma}[{\cite[Lemma $2.3$]{ChoKod:13}}]\label{lm:trivial_mapping_class_group}
Let $M$ be an atoroidal genus two handlebody-knot exterior. Then $\mathrm{MCG}(M,\rel\ \partial M)=\Zone$.  
\end{lemma}

\begin{lemma}\label{lm:tangle_symmetry}
Suppose $\alpha$ is an atoroidal $\tau$-tangle. 
Then
$\mathrm{MCG}_+(B,\alpha,x)\leq \mathbb{Z}_2$,
and $\mathrm{MCG}_+(B,\alpha,x)\simeq \mathbb{Z}_2$ if and only if $\alpha$ is $x$-symmetric.   
\end{lemma}
\begin{proof}
Since $\mathrm{MCG}_+(B,\alpha,x)$ acts on the set $\{y,z\}$, 
there is a homomorphism
\[\Phi:\mathrm{MCG}_+(B,\alpha,x)\rightarrow \mathbb{Z}_2,\]
and $\alpha$ is $x$-symmetric if and only if $\Phi$ is onto. Therefore it suffices to 
show $\Phi$ is injective.
Consider $f\in \paut(B,\alpha,x)$ with $\Phi([f])=0\in \mathbb{Z}_2$, namely 
$f(y)=y$ and $f(z)=z$. Since the pure mapping class group of the $3$-punctured sphere $\partial B-\{x,y,z\}$ is trivial, one can isotope $f$ in $\paut(B,\alpha,x)$ so $f\vert_{\partial B}=\id$. By the uniqueness of a regular neighborhood, we may further assume $f(\rnbhd{\alpha})=\rnbhd{\alpha}$. 
Let $g$ be the restriction of $f$ on $\Compl{\alpha}$, and observe that the restriction of $g$ on the pair of pants $F:=\partial_f\rnbhd{\alpha}$ is isotopic, in $\paut(F, \mathrm{rel}\ \partial F)$, to either the identity $\id$ or some Dehn twist along components of $\partial F$. The latter is impossible by Lemma \ref{lm:dehn_twists}, so we may assume $g$ restricts to the identity $\id$ on $\partial \Compl{\alpha}$. Hence $g$ is isotopic to $\id$ in $\paut(\Compl{\alpha},\mathrm{rel}\ \partial \Compl{\alpha})$ by Lemma \ref{lm:trivial_mapping_class_group}. This implies $f$ can be isotoped in $\paut(B,\rnbhd{\alpha},\alpha)$ so $f$ restricts to the identity $\id$ on $\Compl{\alpha}$. Regard $\rnbhd{\alpha}$ as a unit $3$-ball with $\alpha$ a cone at the origin.
Then the Alexander trick gives an isotopy between $f$ to $\id$ in $\paut(B,\rnbhd{\alpha},\alpha)$. This proves $\Phi$ is injective. 
\end{proof}

We have the following as a corollary of  \cite{Hat:76,Hat:99}.
\begin{lemma}\label{lm:surface_preserving_isotopy}
Let $M$ be a compact $3$-submanifold of $\sphere$, and $S$ an incompressible surface in $M$. Given $f,g\in \mathrm{Homeo}(M,S)$, 
if $f,g$ are isotopic in $\mathrm{Homeo}(M)$ (resp.\ relative to $\partial M$),
then $f,g$ are isotopic in $\mathrm{Homeo}(M,S)$ (resp.\ relative to $\partial M$). 
\end{lemma}

\begin{lemma}\label{lm:stabilizer}
The stabilizer 
$\mathrm{MCG}_+(\sphere,V_\alpha,[M_\ast])\leq \mathrm{MCG}_+(\sphere,V_\alpha)$ of the isotopy class of $M_\ast$ is isomorphic to $\mathrm{MCG}_+(B,\alpha,x)$ if $\alpha$ is not $\pm\frac{1}{3}$-rational. 
\end{lemma}
\begin{proof}
Consider a mapping class $[f]\in\pmcg(S^3,V_\alpha,[M_\ast])$; it may be assumed that $f(M_\ast)=M_\ast$. By the construction in the proof of Lemma \ref{lm:equivalence}\ref{itm:restriction}, 
we may further assume $f(B)=B$ and $f(\alpha)=\alpha$. Particularly, $f$ restricts to a homeomorphism in $\paut(B,\alpha,x)$. 

\noindent
\textbf{Claim: Given $f,g\in \paut(S^3,V_\alpha,B)$, if $f,g$ are isotopic in $\paut(S^3,V_\alpha)$, then $f,g$ are isotopic in 
	$\paut(S^3,V_\alpha,B)$.}
		 
Observe first the restrictions $f_1,g_1$ of $f,g$ on $\Compl{V_\alpha}$ are in $\paut(\Compl{V_\alpha},S_B)$, and $f_1, g_1$ are isotopic in $\paut(\Compl{V_\alpha})$. Thus by Lemma \ref{lm:surface_preserving_isotopy}, $f_1, g_1$ are isotopic in $\paut(\Compl{V_\alpha},S_B)$. 
Therefore, $f$ is isotopic in $\paut(S^3,V_\alpha,B)$ to a homeomorphism $f'$ that restricts to $g_1$ on $\Compl{V_\alpha}$.

Denote by $H_t$ the isotopy between $f,f'$, and by $f_2, g_2\in \paut(V_\alpha, \partial B\cap V_\alpha)$ the restrictions of $f',g$ on $V_\alpha$.
In particular, $f_2=g_2$ on $\partial V_\alpha$, and also $f_2,g_2$ are isotopic in $\paut(V_\alpha)$ because $f,g$ are isotopic in $\paut(S^3,V_\alpha)$.
Given that $\paut(\partial V_\alpha)$ has contractible component by \cite{Sco:70}, $f_2, g_2$ are isotopic in $\paut(V_\alpha)$ relative to $\partial V_\alpha$. Applying Lemma \ref{lm:surface_preserving_isotopy}, we see $f_2,g_2$ are isotopic
in $\paut(V_\alpha,V_\alpha\cap \partial B)$
relative to $\partial V_\alpha$. 
The isotopy between $f_2,g_2$ can therefore be extended to an isotopy $H_t'$ between $f',g$ in $\paut(\sphere,V_\alpha,B)$ relative to $\Compl{V_\alpha}$. Combining  $H_t,H_t'$ proves the assertion. 

Since two homeomorphisms in $\paut(B,\alpha)$ having the same images of $y,z$ are isotopic in $\paut(B,\alpha)$, the claim implies the homomorphism 
\[\Psi:\mathrm{MCG}_+(S^3,V_\alpha, [M_\ast])\rightarrow \mathrm{MCG}_+(B,\alpha,x)\]
given by the restriction of $f$ on $B$ is well-defined.

We now show that $\Psi$ is injective.
Suppose the image of $[f]$ under $\Psi$ is trivial in $\mathrm{MCG}_+(B,\alpha,x)$.  
Then $f$ fixes $x,y,z$. Because the pure mapping class of a $3$-punctured sphere is trivial, it may be assumed that $f$ restricts to the identity $\id$ on $S_B=\Compl {V_\alpha}\cap \partial B$. Observe that $ V_\alpha-\mathring{B}$ 
consists of a solid torus $U$ and a $3$-ball $U'$, and the frontier $F$ of $U\cup U'\subset \Compl{B}$ is the intersection $\Compl{B}\cap \partial V_\alpha$. Since the restriction of $f$ on $\partial U$ preserves the meridian and preferred longitude of $U$, up to isotopy, if the restriction of $f$ on $F$ is not isotopic to $\id$ in $\pmcg(F ,\mathrm{rel}\ \partial F)$, then it is isotopic to some Dehn twists along $\partial F$. This implies the restriction of $f$ on $\partial \Compl{V_\alpha\cup B}=F\cup S_B$ is some Dehn twists along $F\cap S_B$, contradicting Lemma \ref{lm:dehn_twists}. In the same way, 
the restriction $f$ on $B\cap \partial V_\alpha$ 
is isotopic to $\id$. As a result, it may be assumed that $f$ restricts to $\id$ on $\partial V_\alpha \cup S_B$. By Lemma \ref{lm:trivial_mapping_class_group}, we may further assume $f$ restricts to $\id$ on 
$\Compl{V_\alpha}$. Applying the Alexander trick, one can isotope $f$, relative to $\Compl{V_\alpha}$, 
so it restricts to $\id$ on the disks in $V_\alpha\cap \partial B$. Applying Lemma \ref{lm:trivial_mapping_class_group} again, we see the restriction of $f$ on $V_\alpha$ is isotopic, relative to the union $(\partial V_\alpha)\cup (V_\alpha\cap \partial B)$, to $\id$. Consequently,
$f,\id$ are isotopic in $\mathrm{MCG}_+(S^3,V_\alpha)$, and $\Psi$ is injective.

To see the surjectivity, recall the self-homeomorphism $\pi$ of $(\sphere,\Gamma,B)$ swaps $y,z$ (see Fig.\ \ref{fig:hc_fourone}), and induces a self-homeomorphism, still denoted by $\pi$, 
of $(\Compl{B}, \Compl{B}\cap \Gamma_\alpha)$ 
that swaps $y,z$ and fixes $x$. 
Given $[f]\in \pmcg(B,\alpha,x)$, 
since the pure mapping class group of a $3$-punctured sphere is trivial, if $f$ swaps $y,z$, then it may be assumed that the restrictions of $f$ and $\pi$ agree on $\partial B$, and hence $[f]$ is the image of $[f\cup \pi]$ under $\Psi$. Similarly, if $f$ fixes $y,z$, then it may be assumed that $f$ restricts to $\id$ on $\partial B$, so $[f]$ is the image of $[f\cup \id]$ under $\Psi$. 
\end{proof}

\begin{proof}[Proof of Theorem \ref{teo:symmetry}]
Theorem \ref{teo:symmetry}\ref{itm:three}, the case $\alpha$ is $\pm\frac{1}{3}$-rational, is Lemma \ref{lm:five_six}.
	
Now, if $\alpha$ is not $\pm\frac{1}{3}$-rational, then by Lemma \ref{lm:annuli}, $M_\ast$ is the unique M\"obius band with a slope of $-\frac{1}{2}$, so $\pmcg(\sphere,V_\alpha)=\pmcg(\sphere,V_\alpha,[M_\ast])$. Thus by Lemmas \ref{lm:stabilizer} and \ref{lm:tangle_symmetry}, 
$\pmcg(\sphere,V_\alpha)\leq \mathbf{Z}_2$,
and the equality holds if and only if $\alpha$ is $x$-symmetric.
In addition, if $\alpha$ is not $\frac{1}{5}$-rational, then $V_\alpha$ is chiral by Lemma \ref{lm:chirality} and $\mcg(\sphere,V_\alpha)=\pmcg(\sphere,V_\alpha)$.
This, together with Lemma \ref{lm:five_six}, proves Theorem \ref{teo:symmetry}\ref{itm:three}, \ref{itm:x_symmetric} and \ref{itm:not_x_symmetric}. 

To complete the proof, it suffices to determine the symmetry group of $V_{\frac{1}{5}}$. By Lemma \ref{lm:chirality}, it is amphichiral. 
Therefore, there is a short exact sequence
\[0\rightarrow\mathbb{Z}_2\simeq \pmcg(\sphere,V_\alpha)\rightarrow \mcg(\sphere,V_\alpha)\rightarrow \mathbb{Z}_2.\]
Now, the generator $t$ of $\pmcg(\sphere,V_\alpha)$ 
is given by the rotation $\rho$ about the $x$-axis in the last diagram in Fig.\ \ref{fig:deformation_symmetric_form}, and $\rho$ restricts to the hyper-elliptic involution on $\partial V_\alpha$, which gives the center of $\mcg(\partial V_\alpha)$. 
Since $\mcg(\sphere,V_\alpha)\rightarrow \mcg(\partial V_\alpha)$ is injective, $t$ is in the center of $\mcg(\sphere,V_\alpha)$,
so $\mcg(\sphere,V_{\frac{1}{5}})\simeq \Ztwo\times \Ztwo$.  
\end{proof}

\subsection{Extension}\label{subsec:extension}
Consider the spatial handcuff graph $\Gamma^k$ obtained by twisting the disk $D$ bounded by a loop of 
the spatial handcuff graph $\hcfourone$; see Fig.\ \ref{fig:Gamma_k}.
Denote by $V^k_\alpha$ the associated handlebody-knot obtained by replacing $(B,B\cap \Gamma^k)$ with a 
$\tau$-tangle $\alpha$, where $B$ is the $3$-ball in Fig.\ \ref{fig:Gamma_k}; see Fig.\ \ref{fig:V_k_tau} for an example.
%
%
In particular, the exteriors $\Compl {V^k_\alpha}$ and $ \Compl{V_\alpha}$ are homeomorphic, and hence Lemmas \ref{lm:atoroidality}, \ref{lm:non_trivial_essen_irreducible}, \ref{lm:S_disjoint_annulus} generalize to the case of $V^k_\alpha$. 
In particular, $V^k_\alpha$ is atoroidal and irreducible if $\alpha$ is atoroidal and non-trivial.

\begin{figure}[h] 
	\begin{subfigure}{.45\linewidth}
		\centering
		\begin{overpic}[scale=.12,percent]{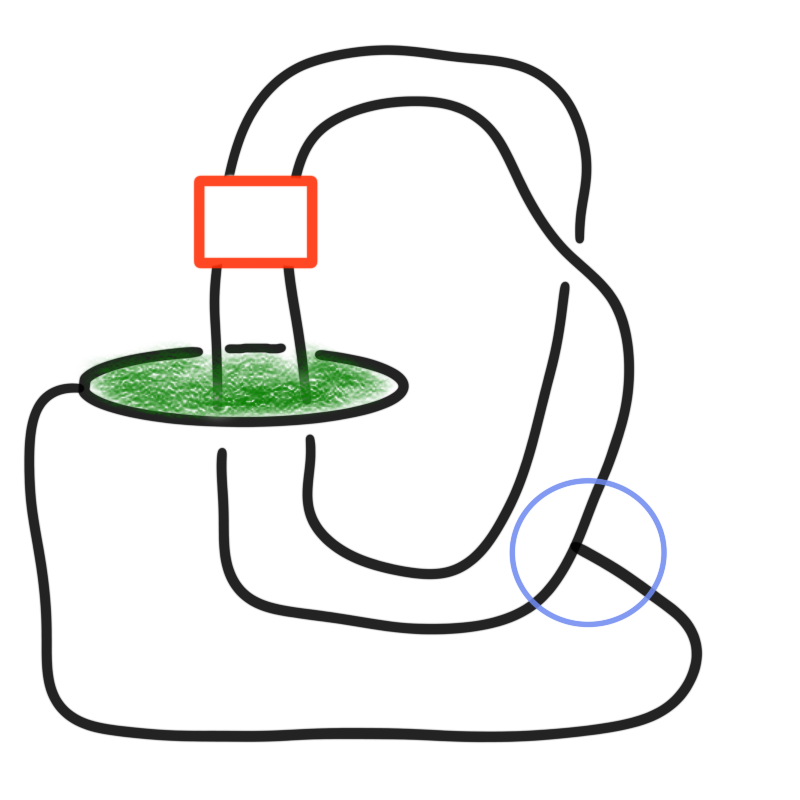}
			\put(30,70){$k$}
			\put(75,32){\footnotesize $B$} 	
			\put(19,49){\footnotesize $D$} 	
			\put(83.5,26){\tiny $x$} 	
			\put(66,20){\tiny $y$} 	
			\put(72,41){\tiny $z$} 	
		\end{overpic}
		\caption{Twisting disk $D$, $\Gamma^k$ and $B$.}
		\label{fig:Gamma_k}
	\end{subfigure}
	\begin{subfigure}{.45\linewidth}
		\centering
		\begin{overpic}[scale=.12,percent]{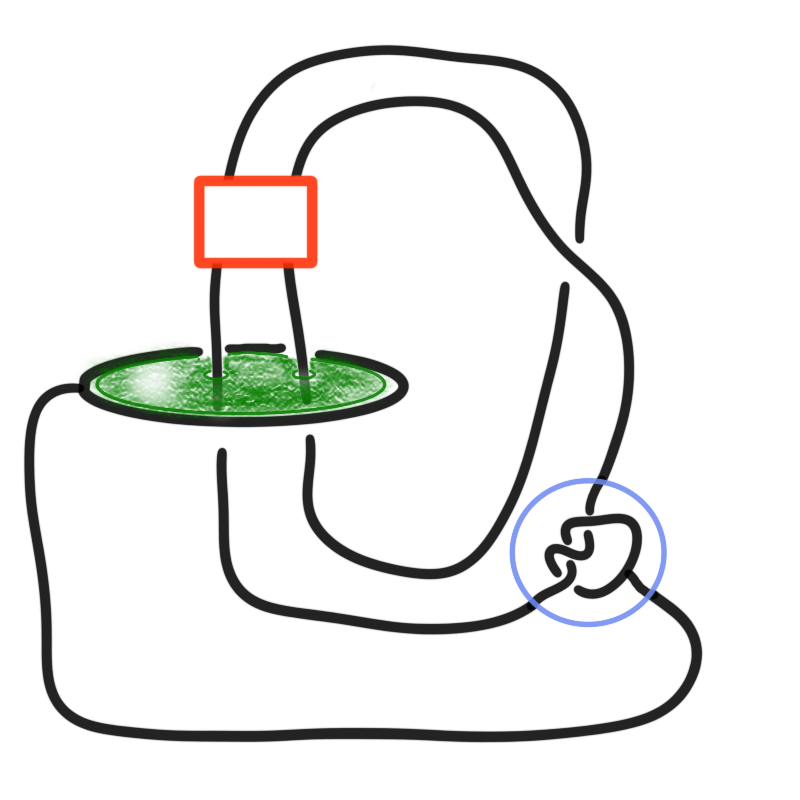}
			\put(30,70){$k$}
			\put(81,37){\footnotesize $(B,\alpha)$} 	
			\put(19,49){\footnotesize $S$} 	
		\end{overpic}
		\caption{The surface $S\subset \Compl{V^k_\alpha}$.}
		\label{fig:V_k_tau}
	\end{subfigure}
	\caption{}
\end{figure}

Suppose $\alpha$ is atoroidal and non-trivial.  
Then the exterior $\Compl {V^k_\alpha}$ admits an essential M\"obius band  $M^k_\ast \subset \Compl{V^k_\alpha}$ given by twisting $M$ in Fig.\ \ref{fig:mobius} $k$ times along $D$. The frontier of its regular neighborhood is an essential annulus $A^k_\ast$ with a slope of $k-\frac{1}{2}$. Also, the pair of pants $S$ in Fig.\ \ref{fig:fourone_tau} induces a pair of pants, namely $D\cap \Compl{V^k_\alpha}$; see Fig.\ \ref{fig:V_k_tau}; denote the pair of pants still by $S$.
Let $\Split {V^k_\alpha}$ be the $3$-manifold obtained by splitting $\Compl{V^k_\alpha}$ along $S$. Let $P^k,F^k\subset \partial V^k_\alpha$ be the components cut off by $\partial S$.
Then the $3$-manifold pair $(\Split {V^k_\alpha},P^k\cup F^k)$ is homeomorphic to 
$(\Split{V_\alpha},P\cup F)$. As a result, Lemma \ref{lm:rectangles_split_manifold} holds true for $V^k_\alpha$ as well, and Theorem \ref{lm:annuli} can be generalized to $V^k_\alpha$.

This allows to define an $D_\ast^k$-surface to be the pair of pants $G$
who has two boundary components parallel to the non-separating disk $D_\ast^k$ disjoint from $M^k_\ast$ and has the other boundary component non-meridional, generalizing Definition \ref{def:M_surface}.  
Since the $3$-manifold pairs $(\Split {V^k_\alpha},P^k\cup F^k)$ and $(\Split{V_\alpha},P\cup F)$
are homeomorphic, the proof of Lemma \ref{lm:unique_M_surface} can be generalized and shows that $S$ is the unique $D_\ast^k$-surface, which implies an analogs of Lemma \ref{lm:equivalence}: 
\begin{lemma}\label{lm:gen_equivalence}
Suppose $\alpha,\beta$ are atoroidal and not $\frac{1}{n}$-rational. Then 
\begin{enumerate}[label=(\roman*)]		\item\label{itm:gen_orientation_preserving} every homeomorphism $f:(\sphere,V^k_\alpha)\rightarrow (\sphere,V^k_\beta)$ is orientation-preserving;
\item\label{itm:gen_restriction} every equivalence between $V^k_\alpha,V^k_\beta$ induces an $x$-equivalence between $\alpha,\beta$. 
\end{enumerate} 
\end{lemma}
  
As a generalization of Theorem \ref{teo:classification}, we have the following.
\begin{theorem}
Suppose $\alpha,\beta$ are atoroidal.
Then $V^k_\alpha, V^k_\beta$ are equivalent if and only if $\alpha,\beta$ are $x$-equivalent.  
\end{theorem}
\begin{proof} 
The case where both $\alpha,\beta$ are not $\frac{1}{n}$-rational follows from Lemma \ref{lm:gen_equivalence}\ref{itm:gen_restriction}.
 
Suppose $\alpha$ and $\beta$ are $\frac{1}{n}$-rational and $\frac{1}{m}$-rational, respectively. 
If $n,m\neq \pm 3$, then the assertion follows by comparing the slopes of the two essential annuli in $\Compl{V^k_\alpha}$: $A_\ast^k$ has a slope of $k-\frac{1}{2}$ and the other, given by twisting $A_n$ in Fig.\ \ref{fig:Aprime} $k$ times, has a slope of $4k+\frac{n-4}{2}$.

If $n,m=\pm 3$, it follows from \cite[Theorem $1.1$]{LeeLee:12} since, if $\alpha$ is $\frac{1}{3}$-rational, then $V^k_\alpha$ is equivalent to the handlebody-knot $V_k$ in \cite{LeeLee:12}, while $\overline{V_{\overline{\alpha}}^k}$ is 
equivalent to $V_\alpha^{-k+1}$, equivalent to the handlebody-knot $V_{-k+1}$ in \cite{LeeLee:12}. 
\end{proof}

\begin{remark}
The handlebody-knot $\seventhirtynine$ (resp.\ $\sevensixty$) can be obtained by twisting $\sixtwelve$ (resp.\ $\sevenfiftynine$) along the disk $D$ once. 
Therefore the exteriors of $\sixtwelve,\seventhirtynine$ (resp.\ $\sevenfiftynine,\sevensixty$) are homeomorphic, so invariants depending only on the handlebody-knot exterior cannot differentiate them. 
\end{remark}


 

 

\end{document}